\newtheorem{theorem}{Theorem}[section]
\newtheorem*{theorem*}{Theorem} %for thm's without number
\newtheorem{lem}[theorem]{Lemma}
\newtheorem{lemma}[theorem]{Lemma}
\theoremstyle{definition}
\newtheorem{rem}[theorem]{Remark}
\newtheorem{remark}[theorem]{Remark}
\definecolor{orange}{rgb}{1.0, 0.55, 0.0}
\DeclareMathOperator*{\Fscr}{\mathcal{F}}
\begin{document}
	\title{Emergence of phase-locked states for a deterministic and stochastic Winfree model with inertia}

	\author{Myeongju Kang\footnote{Research Institute of Basic Sciences, Seoul National University, Seoul 08826, Republic of Korea. E-Mail: bear0117@snu.ac.kr},\, Marco Rehmeier\footnote{Faculty of Mathematics, Bielefeld University, 33615 Bielefeld, Germany. E-Mail: mrehmeier@math.uni-bielefeld.de }}
	
	\date{}
	\maketitle
\begin{abstract}
We study the emergence of phase-locking for Winfree oscillators under the effect of inertia. It is known that in a large coupling regime, oscillators governed by the deterministic second-order Winfree model with inertia converge to a unique equilibrium. In contrast, in this paper we show the asymptotic emergence of non-trivial synchronization in a suitably small coupling regime. Moreover, we study the effect of a new stochastically perturbed Winfree system with multiplicative noise and obtain lower estimates in probability for the pathwise emergence of such a synchronizing pattern, provided the noise is sufficiently small. We also provide numerical simulations which hint at the possibility of more general and stronger analytical results.
\end{abstract}

	\noindent	\textbf{Keywords:} Winfree model, inertia, multiplicative noise, synchronization \\ \\
	\textbf{2020 MSC}: 34F05, 70F40, 92B25
	
	\section{Introduction}
	Collective behavior of self-propelled particles is ubiquitously observed in natural and man-made complex systems. One particular type of such coherent dynamics which has received growing interest in the recent past is the phenomenon of \emph{synchronization}, i.e. the emergence of rhythms in interacting systems of oscillating particles, for example synchronized flashing of fireflies and firing of neurons \cite{ABPRS05, BS00, BB66, PRK01-book, S87, W79, W67}. Two mathematical models used to describe such oscillatory systems are the Kuramoto and Winfree model (\cite{K75, K84} and \cite{W67, W79} respectively). In this paper, we focus on a particular Winfree-type model, namely the \emph{second-order Winfree model with inertia} introduced in \cite{HKS21}, and our analysis includes the case of a perturbation by an external noise as well.

First, let us briefly recall the classical Winfree model as a model for systems of $N$ interacting oscillating particles, which can be visualized as rotors on the unit circle $\mathbb{S}^1$ \cite{AS01, AL14, GMNPS07, OKT17, QRS07}. Each particle of the system has its own natural frequency $\nu^i$. In the absence of interactions, the dynamics of the system are described by decoupled uniform motions of the particles $1,\dots,N$ with frequencies $\nu^1,\dots,\nu^N$, respectively. In this case, denoting the phase of particle $i$ at time $t \geq 0$ by $\theta^i_t$ (i.e. its total angular displacement from the origin as a rotor on $\mathbb{S}^1$), we have $\dot{\theta}^i_t = \nu^i$. Now each particle $i$ fires a signal  $\tilde{I}=1$ whenever it passes through the origin, i.e. $\tilde{I} = \delta_{0}$.  This signal function is usually approximated by a suitable smooth, periodic function $I$. The response $S$ from particle $j$ is a function of its current distance from the origin on $\mathbb{S}^1$. Hence the instantaneous interaction effect on particle $i$ is $S(\theta^i)\frac{1}{N}\sum_{j=1}^{N}I(\theta^j)$.  Common choices for signal and response functions are $I = 1+\cos$ and $S = -\sin$. If $\kappa \geq 0$ describes the common coupling strength within the system, one arrives at the classical first order Winfree model:
\begin{align}\label{Winfree-classical}
	\dot{\theta}^i_t = \nu^i-\frac{\kappa}{N}\sin \theta^i_t\sum_{j=1}^N\big(1+\cos\theta^j_t\big), \quad i \in \{1,\dots,N\},\, t \geq 0.
\end{align}

In contrast to the Kuramoto model, the Winfree model is not conservative. As a consequence, the mathematical analysis of \eqref{Winfree-classical} is harder, but also offers interesting asymptotic features. For example, depending on the size of $\kappa/ |\nu|$, the motions of particles in systems governed by \eqref{Winfree-classical} can vanish (\emph{oscillator death}) or can asymptotically tend to a synchronized motion (\emph{phase-locked state}).  In this paper, we focus on the latter case, but we mention the interesting results from \cite{HKM21-frustration, HKS21, HKPR16-Basic1, HPR15}, where the emergence of oscillator death (more precisely, even the existence of a unique equilibrium for $\Theta = (\theta^1,\dots,\theta^N)$) is proved for suitably confined initial data $\Theta_0$ and sufficiently large coupling coefficient $\kappa > \nu^i$. We also refer to \cite{GMNPS07} for coupling strength and natural frequency phase transition diagrams of the Winfree model.

In order to describe the emergence of nontrivial synchronization, one considers the  \textit{rotation numbers} $\rho^i, 1 \leq i \leq N$, i.e.
\begin{align*}
	\rho^i := \lim_{t \to \infty}\frac{\theta^i_t}{t},
\end{align*}
provided the limit exists. If all $\rho^i$ exist and coincide, the particles asymptotically synchronize their oscillatory dynamics into an ordered motion. More precisely, we call the case $\rho^i= 0$ for all $i$ \textit{oscillator death state} and the case $\rho^i = \rho^1 \neq 0$ for all $i$ \textit{phase-locked state}. Clearly, the latter is implied by
\begin{align}\label{def:PLS}
	\sup_{t \geq 0}\max_{1\leq i \leq N}|\theta^i_t-\theta^j_t| < \infty,
\end{align}
provided one rotation number, say $\rho^1$, exists. Hence, in this paper, we say that a phase-locked state emerges if \eqref{def:PLS} holds. In \cite{HKPR17, OKT17}, it is proven that phase-locked states emerge for sufficiently small $\kappa < |\nu|$ and under suitable restrictions on the spread $\nu^i-\nu^j$ and $\theta^i_0-\theta^j_0$ of the natural frequencies and the initial data. Further works on Winfree-type models include results on continuum limits \cite{HKM21-continuum}, adaptive couplings \cite{HKM21-adaptive}, and models with time-delay \cite{HK18-timedelay} and frustration \cite{HKM21-frustration}.

Recently, in \cite{HKS21} a new Winfree-type model, additionally taking into account the effect of inertia, has been proposed. More precisely, for a finite homogeneous inertia term $m$ and a friction coefficient $\gamma>0$, the model reads
\begin{align*}
	\begin{cases}
	\displaystyle\dot{\theta}^i_t = \omega_t^i,\\
	\displaystyle m\dot{\omega}^i _t= -\gamma \omega^i_t+\nu^i-\frac{\kappa}{N}\sum_{j=1}^N\sin \theta^i_t(1+\cos\theta^j_t),
	\end{cases}
\end{align*}
where $\Omega_t = (\omega^i_t)_{1\leq i \leq N}$ denotes the frequencies of particles $1,\dots,N$ at time $t$.
Assuming $m=1$ (otherwise consider the above system with $\tilde{\gamma},\tilde{\nu}^i$ and $\tilde{\kappa}$ instead of $\gamma, \nu^i$ and $\kappa$, where $\tilde{\gamma} := \gamma m^{-1}, \tilde{\nu}^i:= \nu^im^{-1}$ and $\tilde{\kappa}:= \kappa m^{-1}$), together with a phase-frequency initial condition $(\Theta_0,\Omega_0)$, we arrive at the Cauchy problem for the second-order Winfree model with inertia
\begin{align} \label{sys:WFI}
	\begin{cases}
		\displaystyle d\theta^i_t = \omega^i_t dt, \\
		\displaystyle d\omega^i_t  = \Big[ -\gamma\omega^i_t +\nu^i -\frac{\kappa}{N}  \sin\theta^i_t\sum_{j=1}^N(1+\cos \theta^j_t) \Big] dt, \\
		\displaystyle \big( \theta^i_t, \omega^i_t \big) \bigm|_{t=0} = \big( \theta^i_0, \omega^i_0 \big) .
	\end{cases}
\end{align}
Naturally, the emergence of asymptotic ordered behavior such as oscillator death and phase-locking are intriguing questions also for \eqref{sys:WFI}. However, the second-order nature of \eqref{sys:WFI} renders these questions more challenging compared with the classical case. A first result in this direction was obtained in \cite{HKS21}: For sufficiently large (in terms of $\nu^i$) coupling coefficient $\kappa$ and suitable initial data, the solution $(\Theta_t,\Omega_t)$ converges to a unique equilibrium (which particularly yields the emergence of oscillator death). This is coherent with the results for the first order model: If the particle interaction dominates the self-propelled individual dynamics of the particles, i.e. if $\kappa /|\nu|$ is above a threshold, then the system asymptotically tends towards oscillator death state. To the best of our knowledge, results on the emergence of phase-locked states for \eqref{sys:WFI} have not yet been obtained.

The first main result of this paper is such a phase-locking result for \eqref{sys:WFI}, and can roughly be stated as follows (see Theorem \ref{thm:det_pl} in Section \ref{sec:det} for the precise statement):
\begin{theorem*}[\textbf{Phase locking for \eqref{sys:WFI}}]
For any $\gamma>0$,  $(\nu^i)_{1\leq i \leq N}$ and sufficiently small $D>0$, if $\kappa < |\nu|$ is sufficiently small, then for all initial data $(\Theta_0,\Omega_0)$ which are sufficiently narrowly spread, phase-locking \eqref{def:PLS} emerges for the solution $(\Theta_t,\Omega_t)$ to \eqref{sys:WFI}.
\end{theorem*}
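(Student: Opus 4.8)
The plan is to establish \eqref{def:PLS} by a continuity (bootstrap) argument carried out simultaneously on the phase diameter $D_\Theta(t):=\max_{1\le i,j\le N}(\theta^i_t-\theta^j_t)$ and the frequency diameter $D_\Omega(t):=\max_{1\le i,j\le N}(\omega^i_t-\omega^j_t)$. I would first record two preliminaries. Global existence: the right-hand side of \eqref{sys:WFI} is smooth, and from the linear structure of the $\omega$-equation one gets the a priori bound $|\omega^i_t|\le |\omega^i_0|e^{-\gamma t}+\gamma^{-1}(|\nu^i|+2\kappa)$, so the frequencies stay globally bounded, the phases grow at most linearly, and the solution is global. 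Coherent rotation: under the hypotheses of Theorem~\ref{thm:det_pl} — which, besides $\kappa<|\nu|$, keep the $\nu^i$ within a small spread of a common nonzero value — each $\omega^i_t$ relaxes into a neighbourhood of $\nu^i/\gamma$ and stays bounded away from $0$ with a common sign, so that every $\theta^i$ rotates monotonically with a common nonzero drift $\rho\approx\bar\nu/\gamma$ (with $\bar\nu:=\frac1N\sum_i\nu^i$). This monotone common rotation is the structural feature the whole argument exploits.

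Next I would derive the differential inequalities for the diameters, as upper Dini derivatives evaluated along the extremal pairs of indices. The phase equation gives $\dot D_\Theta\le D_\Omega$, and, writing $P_t:=\frac{1}{N}\sum_{k=1}^N(1+\cos\theta^k_t)\in[0,2]$, $D_\nu:=\max_{i,j}(\nu^i-\nu^j)$, and using $|\sin\theta^i-\sin\theta^j|\le|\theta^i-\theta^j|$,
\[
\dot D_\Omega\le-\gamma D_\Omega+D_\nu+2\kappa D_\Theta .
\]
These two inequalities do not close: there is no damping in the $D_\Theta$-equation, and integrating them bluntly only yields linear-in-$t$ growth of $D_\Theta$ unless $D_\nu=\kappa=0$. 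The heart of the proof is therefore to show that the coupling in fact produces an \emph{effective restoring force} on the phase differences of size $\asymp\kappa$. Writing $\sin\theta^i-\sin\theta^j=c_{ij}(t)(\theta^i-\theta^j)$ with $c_{ij}(t)=(\theta^i_t-\theta^j_t)^{-1}(\sin\theta^i_t-\sin\theta^j_t)$, the difference $u_{ij}:=\theta^i-\theta^j$ solves $\ddot u_{ij}+\gamma\dot u_{ij}+\kappa P_t c_{ij}(t)\,u_{ij}=\nu^i-\nu^j$; although $P_t c_{ij}(t)$ is sign-indefinite pointwise, the monotone common rotation forces its average over each revolution to be positive and of order $\kappa$ as long as $D_\Theta$ remains below a fixed small threshold $D$, so that all $\theta^k_t$ stay in a short arc — on the sub-arc where $\cos$ is positive the coupling is restoring, while on the sub-arc where $\cos$ is negative the anti-restoring effect is suppressed because $P_t$ is then small. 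I would make this precise either by an averaging / multiple-time-scale estimate valid on all of $[0,\infty)$, or more elementarily by a cycle-by-cycle bookkeeping that tracks the drifting arc, nets a gain of order $\kappa D_\Theta$ per revolution, and uses $|\dot u_{ij}|\le D_\Omega$ to control within-cycle excursions.

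With this in hand, the bootstrap would close as follows. Fix $D$ small enough for the arc picture above, and suppose for contradiction that $T^*:=\sup\{t:D_\Theta(s)\le D\ \forall s\le t\}<\infty$. On $[0,T^*)$ one argues in two phases. In the transient phase one uses the narrow spread of $(\Theta_0,\Omega_0)$, the relaxation of $D_\Omega$ to size $\lesssim\gamma^{-1}(D_\nu+\kappa D)$ on the time scale $\gamma^{-1}$, and the compatibility conditions among $\gamma$, $D_\nu$, $\kappa$, $D$ and the initial spreads built into Theorem~\ref{thm:det_pl}, to keep $D_\Theta\le D/2$. In the subsequent phase the effective-restoring estimate shows that $D_\Theta$ is attracted to, and trapped below, a quasi-equilibrium of size $\asymp D_\nu/\kappa<D/2$; hence $\limsup_{t\uparrow T^*}D_\Theta(t)<D$, contradicting maximality of $T^*$. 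Therefore $T^*=\infty$, i.e. $\sup_{t\ge0}D_\Theta(t)\le D<\infty$, which is exactly \eqref{def:PLS}; since moreover $\rho^1=\lim_{t\to\infty}\theta^1_t/t$ then exists and equals the common nonzero value $\approx\bar\nu/\gamma$, the emerging synchronization is a genuine phase-locked state rather than oscillator death.

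I expect the decisive obstacle to be the effective-restoring estimate. Because the Winfree interaction is not of gradient type — the model is not conservative, as noted above — and the coefficient $P_t c_{ij}(t)$ changes sign along each revolution, no Lyapunov functional is available off the shelf; the positivity of the revolution-averaged coefficient has to be extracted by hand and, crucially, made uniform in $t\in[0,\infty)$ while the phases are known only to lie in an arc whose centre keeps drifting. A further, more routine, difficulty is dovetailing the transient and quasi-equilibrium phases of the bootstrap with mutually compatible smallness choices for $D$, $\kappa$, $D_\nu$ and the initial spreads.
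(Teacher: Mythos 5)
Your overall strategy --- a bootstrap on the phase diameter, monotone common rotation of the mean phase, and the observation that the factor $1+\cos$ suppresses the anti-restoring half of each revolution so that the coupling coefficient has positive average of order $\kappa$ per cycle --- is exactly the structural mechanism the paper exploits. But the step you yourself flag as the decisive obstacle is left unproved, and the route you propose for it is the one place where the argument can genuinely fail. You reduce to the second-order equation $\ddot u_{ij}+\gamma\dot u_{ij}+\kappa P_t c_{ij}(t)u_{ij}=\nu^i-\nu^j$ and assert that positivity of the revolution-average of the stiffness coefficient yields an effective restoring force. For second-order equations with sign-indefinite periodic stiffness this implication is not automatic (this is precisely the regime of parametric instability), and neither of the two routes you name --- an averaging estimate uniform on $[0,\infty)$, or cycle-by-cycle bookkeeping along a drifting arc --- is actually carried out. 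So, as written, the proof has a hole exactly at its center.

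The paper closes this hole with two concrete devices that are missing from your sketch. First, it never works with the second-order equation for $\theta^{ij}$: it passes to the first-order quantity $R^{ij}_t:=\omega^{ij}_t+\gamma\theta^{ij}_t$, whose derivative is $\nu^{ij}-\kappa\mathcal I_c(\Theta_t)(\sin\theta^i_t-\sin\theta^j_t)$, and then replaces $\sin\theta^i_t-\sin\theta^j_t$ by $\cos\theta^c_t\,\theta^{ij}_t$ and $\theta^{ij}_t$ by $R^{ij}_t/\gamma$ up to errors of size $O(\kappa D^2+\kappa m_0/\gamma)$ (Lemmas \ref{L1.2} and \ref{L3}); this turns the problem into a scalar first-order linear differential inequality $\frac{d}{dt}R^{ij}_t\le \alpha-\tfrac{\kappa}{\gamma}\cos\theta^c_t(1+\cos\theta^c_t)R^{ij}_t$, for which a positive average of the coefficient genuinely does imply uniform boundedness. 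Second, to make that coefficient a bona fide periodic function it reparametrizes time by the mean phase, $r=\theta^c_t$ (legitimate since $\theta^c$ is strictly increasing by Lemma \ref{L1}), after which $\beta(r)=\tfrac{\kappa}{\nu^c}\cos r(1+\cos r)$ is $2\pi$-periodic with $\int_0^{2\pi}\beta(s)\,ds=\kappa\pi/\nu^c>0$, and the explicit comparison Lemma \ref{L4} for first-order ODEs with periodic coefficients delivers $R^{ij}_t\le 2\pi R\alpha_D$ whenever $R^{ij}_0\le 2\pi L\alpha_D$. Integrating $\frac{d}{dt}(e^{\gamma t}\theta^{ij}_t)=e^{\gamma t}R^{ij}_t$ then closes the bootstrap in a single step, with no separate transient and quasi-equilibrium phases. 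If you adopt the substitution $R^{ij}=\omega^{ij}+\gamma\theta^{ij}$ together with the $\theta^c$-reparametrization, your outline becomes essentially the paper's proof; without them, the averaging claim remains an unjustified assertion about a parametrically forced damped oscillator.
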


A further natural question in conjunction with complex systems of interacting particles observed in nature is the effect of external noises and its influence on the system in competition with the interaction between particles. Not only do  noise-perturbed systems often offer a more adequate description of dynamics observed in our environment, but it is also widely known that the effect of e.g. white noise can regularize an ill-posed deterministic system and thus lead to a more satisfactory mathematical analysis \cite{CG16, Flandoli-book, FGP10, Gess18, Veretennikov80}. It is hence not surprising that stochastically perturbed versions of \eqref{Winfree-classical} have been studied in the literature, both with additive \cite{K19} and multiplicative Brownian noises \cite{HKS-unpublished}, though exclusively for the emergence of oscillator death. In brief, if $\kappa >\sigma$ is suitably large, where $\sigma$ is the noise strength, the emergence of oscillator death is also observed in the stochastic case. However, for $\nu \neq 0$, the convergence is not known pathwise, but in probability \cite{HKS-unpublished}, and in \cite{K19} only estimates in probability for local in time boundedness of $\Theta_t$ are provided.

 As far as we know, a result on the emergence of phase-locked states for noisy versions of \eqref{Winfree-classical} is not known, and stochastic perturbations of the second-order model \eqref{sys:WFI} have not been considered at all in the literature.
Our second main result addresses this point: We prove a phase-locking result for the following stochastic version of \eqref{sys:WFI}, where $\omega^c := \frac 1 N \sum_{i=1}^N\omega^i$:
\begin{equation}\label{stoch-Winfree-eq}
	\begin{cases}
		\displaystyle d\theta^i_t = \omega^i_t dt, \\
		\displaystyle d\omega^i_t = \Big[-\gamma\omega^i_t + \nu^i - \kappa \sin\theta^i_t \sum_{j=1}^N \big( 1+\cos \theta^j_t \big) \Big] dt +\sigma_t(\omega^i_t-\omega^c_t)dB_t, \\
		\displaystyle (\theta^i_t,\omega^i_t)\bigm|_{t=0}=(\theta^i_0,\omega^i_0),
	\end{cases}
\end{equation}
where $B$ is a $1D$-Brownian motion and $\sigma: \mathbb{R}\to \mathbb{R}_+$ is a time-inhomogeneous noise coefficient. The interpretation of our proposed noise is the following: Each particle $i$ is subject to a common external noise, the strength of which varies with time and is proportional to the current deviation of its frequency $\omega^i_t$ from the average frequency $\omega^c_t$ of the system. A similar type of multiplicative noise was considered in \cite{AH10} for the Cucker-Smale flocking model. Our main result in the stochastic case can roughly be stated as follows (see Theorem \ref{thm-stoch-case} in Section \ref{sec:stoch} for a precise formulation):
\begin{theorem*}[\textbf{Phase locking for \eqref{stoch-Winfree-eq}}]
Let $\max_{i,j}|\nu^i-\nu^j|$ and $\kappa<|\nu|$ be sufficiently small, $\gamma$ sufficiently large and $\delta>0$ a sufficiently small number. Assume $\sigma$ is smaller in $L^\infty \cap L^2$ than some absolute constant. Then, for sufficiently narrowly spread initial data $(\Theta_0,\Omega_0)$,  phase-locking for the solution $(\Theta_t,\Omega_t)$ to \eqref{stoch-Winfree-eq} occurs pathwise with probability at least $1-2\exp\big(-\frac{\delta^2}{2||\sigma||^2}\big)$.
\end{theorem*}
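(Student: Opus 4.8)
The plan is to turn the stochastic problem into a \emph{pathwise} one on a large event and then run a perturbation of the deterministic analysis of Theorem~\ref{thm:det_pl}. The key structural observation is that the noise in \eqref{stoch-Winfree-eq} is degenerate along the mean: averaging the $\omega$-equation annihilates the diffusion term, so $\omega^c$ solves an ODE driven by the angles, while the fluctuations $\hat\omega^i := \omega^i-\omega^c$ satisfy the \emph{linear-in-noise} SDE $d\hat\omega^i_t = [-\gamma\hat\omega^i_t+G^i_t]\,dt + \sigma_t\hat\omega^i_t\,dB_t$, where $G^i_t$ collects the detrended forcing $\nu^i-\nu^c$ and the detrended coupling term. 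Since the diffusion coefficient is $\sigma_t$ times the \emph{common} stochastic exponential $Z_t := \exp\big(\int_0^t\sigma_s\,dB_s-\tfrac12\int_0^t\sigma_s^2\,ds\big)$, the substitution $\tilde\omega^i_t := Z_t^{-1}\hat\omega^i_t$ together with It\^o's formula removes the noise entirely: $\dot{\tilde\omega}^i_t = -\gamma\tilde\omega^i_t + Z_t^{-1}G^i_t$. Keeping $\theta^i$ and $\omega^c$ as they are, one obtains a closed \emph{random ODE system} for $(\Theta_t,\tilde\Omega_t,\omega^c_t)$, with $\tilde\Omega=(\tilde\omega^i)_i$, in which all randomness is carried by the single scalar path $t\mapsto Z_t$ (and $\dot\theta^i_t = \omega^c_t + Z_t\tilde\omega^i_t$). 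This reduction, via pathwise uniqueness, also furnishes a global strong solution of \eqref{stoch-Winfree-eq}, since the coefficients are smooth and the a priori bounds below preclude explosion.

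Next I would introduce the good event
\[
	\mathcal{A}_\delta := \Big\{\sup_{t\geq 0}\Big|\int_0^t\sigma_s\,dB_s\Big|\leq\delta\Big\}.
\]
As $M_t := \int_0^t\sigma_s\,dB_s$ is a continuous martingale with $\langle M\rangle_\infty = \|\sigma\|_{L^2}^2 =: \|\sigma\|^2$, a time change $M_\cdot = W_{\langle M\rangle_\cdot}$ for a Brownian motion $W$ together with the reflection principle and the Gaussian tail bound $\mathbb{P}(N(0,1)>x)\leq\tfrac12 e^{-x^2/2}$ yields $\mathbb{P}(\mathcal{A}_\delta)\geq 1-2\exp\!\big(-\delta^2/(2\|\sigma\|^2)\big)$, which is precisely the asserted probability. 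On $\mathcal{A}_\delta$ one has $\langle M\rangle_t\in[0,\|\sigma\|^2]$, hence $Z_t,Z_t^{-1}\in[e^{-\delta-\|\sigma\|^2/2},e^{\delta}]$; since $\delta$ and $\|\sigma\|$ are both small, $Z_t$ and $Z_t^{-1}$ remain uniformly (in $t$ and in $\omega$) within a prescribed small neighbourhood of $1$. Consequently, on $\mathcal{A}_\delta$ the random ODE system is a small, \emph{uniformly-in-time} perturbation of the deterministic system \eqref{sys:WFI} written in fluctuation form.

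It then remains to show that, deterministically on $\mathcal{A}_\delta$, phase-locking \eqref{def:PLS} holds. For this I would re-use the Lyapunov functional $\mathcal{E}_t$ from the proof of Theorem~\ref{thm:det_pl} — a quadratic form in the phase fluctuations $\theta^i-\theta^j$ and the frequency fluctuations $\tilde\omega^i-\tilde\omega^j$, including the cross term required to handle the second-order/friction structure. Differentiating $\mathcal{E}_t$ along the random ODE, the ``deterministic part'' reproduces the dissipative estimate of Theorem~\ref{thm:det_pl} (using $\kappa<|\nu|$ small, $\max_{i,j}|\nu^i-\nu^j|$ small, $\gamma$ large, and the narrowly spread initial data), while the remaining terms are multiples of $(Z_t-1)$ and $(Z_t^{-1}-1)$ times quantities already present there, hence on $\mathcal{A}_\delta$ small enough — relative to the $\gamma$-dissipation — to be absorbed. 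A continuation/bootstrap argument then keeps $\mathcal{E}_t$ in the confined region for all $t\geq 0$, with $\max_{i,j}|\omega^i_t-\omega^j_t|$ (equal to $Z_t\max_{i,j}|\tilde\omega^i_t-\tilde\omega^j_t|$, up to a bounded factor) decaying exponentially, so that $\sup_{t\geq 0}\max_{i,j}|\theta^i_t-\theta^j_t|\leq\max_{i,j}|\theta^i_0-\theta^j_0|+\int_0^\infty\max_{i,j}|\omega^i_s-\omega^j_s|\,ds<\infty$. As this holds pathwise on $\mathcal{A}_\delta$, the theorem follows.

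The main obstacle I anticipate is the uniform-in-time part of the last step: unlike a Brownian forcing, the factor $Z_t$ does not decay, so its small but persistent effect on the nonlinear coupling and on the forcing must be dominated by the friction-driven dissipation for \emph{all} $t\geq 0$ — this is exactly what links the smallness of $\delta$ and $\|\sigma\|$ to the largeness of $\gamma$. Making the chain of constants in the continuation argument close, in particular ensuring that the confined region is genuinely invariant rather than merely locally stable under the $Z$-perturbation, is the delicate bookkeeping. A secondary, more routine point is the rigorous justification of the Doss--Sussmann-type reduction (pathwise uniqueness for \eqref{stoch-Winfree-eq} and the It\^o computation identifying the solution of the random ODE with the solution of \eqref{stoch-Winfree-eq}).
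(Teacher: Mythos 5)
Your probabilistic reduction is essentially the one the paper uses: the noise acts linearly on the frequency differences (it cancels entirely in $\omega^c$), so multiplying by a suitable exponential of $-\int_0^t\sigma_s\,dB_s$ removes the martingale part pathwise --- the paper takes $Y_t=\frac{1}{\cosh\delta}\exp\big(-\int_0^t\sigma_s\,dB_s\big)$ applied to $\omega^{ij}_t$ rather than the stochastic exponential applied to $\omega^i_t-\omega^c_t$, which is the same idea up to a harmless drift correction --- and the event $A_\delta=\{\sup_{t\geq0}|\int_0^t\sigma_s\,dB_s|<\delta\}$ carries exactly the stated probability by a Bernstein-type bound. Up to that point you are on track.

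The gap is in the deterministic core. The proof of Theorem \ref{thm:det_pl} does not use a quadratic Lyapunov functional, and no such functional can be monotone here: linearizing the coupling gives $\sin\theta^i_t-\sin\theta^j_t\approx\cos\theta^c_t\,\theta^{ij}_t$, and $\cos\theta^c_t$ changes sign as $\theta^c_t$ rotates (Lemma \ref{L1} forces $\theta^c_t\nearrow\infty$), so the interaction is \emph{destabilizing} on half of each revolution and only its average over a full period is dissipative. The paper's mechanism is to reparametrize $R^{ij}_t=\omega^{ij}_t+\gamma\theta^{ij}_t$ by $r=\theta^c_t$ and compare with the $2\pi$-periodic linear ODE of Lemma \ref{L4}, whose stability rests precisely on $\int_0^{2\pi}\cos r\,(1+\cos r)\,dr=\pi>0$; this yields the uniform bound $2\pi R\alpha_D$, after which a Gronwall step on $e^{\gamma t}\theta^{ij}_t$ closes the continuation argument. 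Your substitute --- friction-driven dissipation absorbing the $(Z_t-1)$-perturbation --- does not close, and your final step is false as stated: $\mathcal D(\Omega_t)$ does not decay exponentially to $0$, since $\omega^{ij}_t$ is persistently forced by $\nu^{ij}$ and by the coupling differences and only stays bounded by roughly $(\mathcal D(\nu)+2\kappa D)/\gamma$ (Lemma \ref{L1.2}); hence $\int_0^\infty\max_{i,j}|\omega^{ij}_s|\,ds=\infty$ in general and the bound $\sup_t\max_{i,j}|\theta^{ij}_t|\leq\mathcal D(\Theta_0)+\int_0^\infty\mathcal D(\Omega_s)\,ds$ gives nothing. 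To repair the argument you must carry the periodic-comparison lemma into the perturbed setting, exactly as the paper does for $Q^{ij}_t=Y_t\omega^{ij}_t+\gamma\theta^{ij}_t$.
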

Similarly as for the phase-locking results for \eqref{Winfree-classical}, we need to assume that the natural frequencies $\nu^i$ dominate the coupling strength $\kappa$. Also, for given $\delta$, the lower bound for the probability in the assertion can be made arbitrarily large in $(0,1)$, if $\sigma$ becomes sufficiently small. We remark that we did not identify a noise-induced regularizing effect on the system, but rather have to tame the noise in order to obtain phase-locking with large probability. 
\\	

This paper is organized as follows. In Section \ref{sec:det}, we study the second-order Winfree model \eqref{sys:WFI}. The main result on the emergence of phase-locking is Theorem \ref{thm:det_pl}. We also present an example of admissible choices of system parameters and initial conditions for which the result applies. In Section \ref{sec:stoch}, we introduce the stochastic model \eqref{stoch-Winfree-eq}, formulate and prove the main result in the stochastic case Theorem \ref{thm-stoch-case}. Again, we present an example of admissible parameter choices. In Section \ref{sect:numerics}, we present numerical simulations for both cases and we further provide numerical results motivating future works. Finally, Section \ref{sec:conclusion} contains a brief summary of our results.

\paragraph{Acknowledgements}
M.R. is supported by the CRC 1283 of the German Research Foundation.

\section{Second-order deterministic Winfree model with inertia}\label{sec:det}
	The following notation is used throughout the paper.
	\begin{align*}
		& \Theta_t := \big( \theta^1_t, \cdots, \theta^N_t \big), \quad \Omega_t := \big( \omega^1_t, \cdots, \omega^N_t \big), \quad \nu := \big( \nu^1, \cdots, \nu^N \big), \quad \mathcal I_c(\Theta_t) := \frac{1}{N} \sum_{i=1}^N \big( 1+\cos\theta^i_t \big), \\
		& \theta^c_t := \frac{1}{N} \sum_{i=1}^N \theta^i_t, \quad \omega^c_t := \frac{1}{N} \sum_{i=1}^N \omega^i_t, \quad \nu^c := \frac{1}{N} \sum_{i=1}^N \nu^i, \quad \theta^{ij}_t := \theta^i_t -\theta^j_t, \quad \omega^{ij}_t := \omega^i_t -\omega^j_t,
	\end{align*}
	and for ${\bf x} = (x^1, \cdots, x^N) \in \mathbb R^N$, we write $\mathcal D({\bf x}) := \max_{1\leq i, j\leq N} |x^i -x^j|$. We write $\|f\|_2$ for the $L^2$-norm (with respect to Lebesgue measure) of a measurable function $f:\mathbb{R}\to \mathbb{R}$.
	\\
	
	The aim of this section is to prove the emergence of phase-locking for the deterministic second-order $N$-particle Winfree model with inertia \eqref{sys:WFI} under suitable assumptions on the system parameters and the initial data, see Theorem \ref{thm:det_pl}.
	We start with the following lemma, which we shall use within the proof of Theorem \eqref{thm:det_pl}. For a real valued function $f$, we write $f^+ := \max(f,0)$ and $f^-:= -\min(f,0)$.
	
	\begin{lemma} \label{L4}\textup{\cite[Lemma 2.1]{OKT17}}
		 Let $x(t)$ be a solution of the differential equation
		\begin{align} \label{A-3}
			 \frac{d}{dt}x(t) = \alpha -\beta(t)x(t),\quad t \in \mathbb{R},
		\end{align}
		where $\alpha > 0$ is a constant and $\beta:\mathbb{R}\to \mathbb{R}$ is continuously differentiable and $2\pi$-periodic with
		\begin{align*}
			\int_0^{2\pi} \beta(s) ds > 0.
		\end{align*}
		Then, there exists a unique positive $2\pi$-periodic solution
		\begin{align*}
			x(t) = \frac{\displaystyle \alpha \int_0^{2\pi} \exp\bigg( -\int_\tau^{2\pi} \beta(s+t) ds \bigg) d\tau}{\displaystyle 1-\exp\bigg( -\int_0^{2\pi} \beta(s) ds \bigg)},\quad t \in \mathbb{R},
		\end{align*}
		which obeys the following bounds:
		\begin{align*}
			\frac{\displaystyle 2\alpha\pi \exp\bigg( -\int_0^{2\pi} \beta^+(s) ds \bigg)}{\displaystyle 1-\exp\bigg(-\int_0^{2\pi} \beta(s) ds \bigg)} \leq x(t) \leq \frac{\displaystyle 2\alpha\pi \exp\bigg( \int_0^{2\pi} \beta^-(s) ds \bigg)}{\displaystyle 1-\exp\bigg(-\int_0^{2\pi} \beta(s) ds \bigg)}, \quad t \in\mathbb R.
		\end{align*}
	\end{lemma}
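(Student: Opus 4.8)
The plan is to solve the linear equation \eqref{A-3} explicitly by an integrating factor, to characterize periodic solutions through a single-point condition, to pin down the (unique) admissible initial value, and finally to read off positivity and the two bounds directly from the resulting formula. The only genuinely non-mechanical points are the uniqueness-based characterization of periodicity and the bookkeeping of signs in the exponential estimates; everything else is routine.

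First I would set $B(t):=\int_0^t\beta(s)\,ds$ and observe that $\frac{d}{dt}\bigl(e^{B(t)}x(t)\bigr)=\alpha e^{B(t)}$, so every solution of \eqref{A-3} satisfies $x(t)=e^{-B(t)}\bigl(x(0)+\alpha\int_0^t e^{B(s)}\,ds\bigr)$. Next I would show that a solution $x$ of \eqref{A-3} is $2\pi$-periodic if and only if $x(0)=x(2\pi)$: indeed $s\mapsto x(s+2\pi)$ solves the same equation (here $2\pi$-periodicity of $\beta$ enters) with the same value at $s=0$, so uniqueness for the linear Cauchy problem forces $x(\cdot+2\pi)\equiv x$. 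Imposing $x(0)=x(2\pi)$ in the explicit formula and using $B(2\pi)=\int_0^{2\pi}\beta(s)\,ds>0$ (so $1-e^{-B(2\pi)}>0$, legitimizing the division) determines $x(0)$ uniquely as
\[
x(0)=\frac{\alpha\int_0^{2\pi}\exp\!\bigl(B(\tau)-B(2\pi)\bigr)\,d\tau}{1-e^{-B(2\pi)}}=\frac{\alpha\int_0^{2\pi}\exp\!\bigl(-\int_\tau^{2\pi}\beta(s)\,ds\bigr)\,d\tau}{1-\exp\!\bigl(-\int_0^{2\pi}\beta(s)\,ds\bigr)},
\]
which is the claimed formula at $t=0$; conversely, taking this value of $x(0)$ and letting $x$ be the corresponding solution produces, by construction, a $2\pi$-periodic solution, so existence holds. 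Uniqueness of the periodic solution follows from the same one-point criterion: the difference $z$ of two periodic solutions solves $\dot z=-\beta z$, hence $z(t)=z(0)e^{-B(t)}$, and $z(0)=z(2\pi)$ together with $B(2\pi)>0$ forces $z\equiv0$. For general $t$ I would apply the $t=0$ case to the shifted data $\tilde x(s):=x(s+t)$, which is a $2\pi$-periodic solution of $\dot{\tilde x}=\alpha-\tilde\beta(s)\tilde x$ with $\tilde\beta(\cdot):=\beta(\cdot+t)$ again $C^1$, $2\pi$-periodic and of positive mean; evaluating the proven formula for $\tilde x$ at $s=0$ and using $\int_0^{2\pi}\beta(s+t)\,ds=\int_0^{2\pi}\beta(s)\,ds$ yields the stated expression for $x(t)$.

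Finally, positivity is immediate since $\alpha>0$, the integrand is positive, and the denominator is positive. For the bounds I would split $\beta=\beta^+-\beta^-$ inside $\int_\tau^{2\pi}\beta(s+t)\,ds$ and use that $\beta^\pm\geq0$ are $2\pi$-periodic together with $[\tau,2\pi]\subseteq[0,2\pi]$, which gives $-\int_0^{2\pi}\beta^-(s)\,ds\leq\int_\tau^{2\pi}\beta(s+t)\,ds\leq\int_0^{2\pi}\beta^+(s)\,ds$ for every $\tau\in[0,2\pi]$, hence $\exp\!\bigl(-\int_0^{2\pi}\beta^+(s)\,ds\bigr)\leq\exp\!\bigl(-\int_\tau^{2\pi}\beta(s+t)\,ds\bigr)\leq\exp\!\bigl(\int_0^{2\pi}\beta^-(s)\,ds\bigr)$. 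Integrating in $\tau$ over $[0,2\pi]$ and dividing by $1-\exp\!\bigl(-\int_0^{2\pi}\beta(s)\,ds\bigr)$ produces the two displayed inequalities. I expect no real obstacle here; the crux is simply to argue the periodicity criterion carefully so that one genuinely recovers a single periodic solution, and to keep the signs straight in the exponential estimates.
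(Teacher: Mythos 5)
Your proof is correct and complete: the integrating-factor representation, the one-point periodicity criterion via uniqueness for the linear Cauchy problem, the shift argument for general $t$, and the sign bookkeeping $-\int_0^{2\pi}\beta^-\leq\int_\tau^{2\pi}\beta(\cdot+t)\leq\int_0^{2\pi}\beta^+$ all check out. The paper itself gives no proof of this lemma (it is imported verbatim from \cite[Lemma 2.1]{OKT17}), and your argument is the standard one that the cited reference uses, so there is nothing to add.
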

	
%	\begin{proof}
%	\textcolor{red}{	For any fixed $t_0\in\mathbb R$, one can directly solve \eqref{A-3}:
%		\begin{align*}
%			x(t) = x(t_0) \exp\bigg( -\int_{t_0}^t \beta(s) ds \bigg) +\alpha \int_{t_0}^t \exp\bigg( -\int_\tau^t \beta(s) ds \bigg) d\tau.
%		\end{align*}
%		If $x(t)$ is $2\pi$-periodic, one can obtain
%		\begin{align*}
%			x(t_0) = x(t_0+2\pi) = x(t_0) \exp\bigg( -\int_{t_0}^{t_0+2\pi} \beta(s) ds \bigg) +\alpha \int_{t_0}^{t_0+2\pi} \exp\bigg( -\int_\tau^{t_0+2\pi} \beta(s) ds \bigg) d\tau,
%		\end{align*}
%		which is equivalent to
%		\begin{align*}
%			x(t_0) = \frac{\displaystyle \alpha \int_{t_0}^{t_0+2\pi} \exp\bigg( -\int_\tau^{t_0+2\pi} \beta(s) ds \bigg) d\tau}{\displaystyle 1-\exp\bigg( -\int_0^{2\pi} \beta(s) ds \bigg)} = \frac{\displaystyle \alpha \int_0^{2\pi} \exp\bigg( -\int_\tau^{2\pi} \beta(s+t_0) ds \bigg) d\tau}{\displaystyle 1-\exp\bigg( -\int_0^{2\pi} \beta(s) ds \bigg)},
%		\end{align*}
%		and this becomes the unique positive $2\pi$-periodic solution of \eqref{A-3}. Bounds for this periodic solution can be obtained as follows:
%		\begin{align*}
%			& -\int_\tau^{2\pi} \beta(s+t_0) ds = \int_\tau^{2\pi} \beta^-(s+t_0) ds -\int_\tau^{2\pi} \beta^+(s+t_0) ds \leq \int_0^{2\pi} \beta^-(s) ds, \\
%			& -\int_\tau^{2\pi} \beta(s+t_0) ds = \int_\tau^{2\pi} \beta^-(s+t_0) ds -\int_\tau^{2\pi} \beta^+(s+t_0) ds \geq -\int_0^{2\pi} \beta^+(s) ds.
%		\end{align*} 	
%DO WE NEED TO PROVE THIS?}
%	\end{proof}
	
	Moreover, we observe the following bounds for the averaged process $(\theta^c_t,\omega^c_t)$.
	\begin{lemma} \label{L1}
		Suppose the initial data and system parameters satisfy
		$$\omega^c_0 > 0 \quad \mbox{and} \quad \kappa < \frac{\nu^c}{2},$$
		and let $(\Theta_t, \Omega_t)$ be a global smooth solution of \eqref{sys:WFI}. Then, $\omega^c_t$ is uniformly bounded:
		\begin{align*}
			0 < \min\bigg\{ \omega^c_0, ~\frac{\nu^c-2\kappa}{\gamma} \bigg\} \leq \omega^c_t \leq \max \bigg\{ \omega^c_0, ~\frac{\nu^c+2\kappa}{\gamma} \bigg\}, \quad t\geq0.
		\end{align*}
		In particular, $t\mapsto \theta^c_t$ is strictly increasing and unbounded.
	\end{lemma}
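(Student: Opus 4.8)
The plan is to reduce the assertion to a scalar differential inequality for the averaged frequency $\omega^c_t$ and then invoke an elementary comparison argument. First I would average the second line of \eqref{sys:WFI} over $i\in\{1,\dots,N\}$; since the resulting double sum factorizes, the definitions of $\omega^c_t$, $\nu^c$ and $\mathcal I_c(\Theta_t)$ give
\begin{align*}
	\dot\omega^c_t = -\gamma\omega^c_t + \nu^c - \kappa\,\Big(\tfrac1N\sum_{i=1}^N\sin\theta^i_t\Big)\,\mathcal I_c(\Theta_t), \qquad t\ge0.
\end{align*}
Because $|\sin\theta^i_t|\le1$ and $0\le 1+\cos\theta^j_t\le2$, the last term is bounded in modulus by $2\kappa$ for every $t$, so this identity yields the two-sided inequality
\begin{align*}
	-\gamma\omega^c_t + \nu^c - 2\kappa \;\le\; \dot\omega^c_t \;\le\; -\gamma\omega^c_t + \nu^c + 2\kappa, \qquad t\ge0.
\end{align*}
Moreover, the hypothesis $\kappa<\nu^c/2$ together with $\kappa\ge0$ forces $\nu^c>2\kappa>0$, so both $\tfrac{\nu^c-2\kappa}{\gamma}$ and $\tfrac{\nu^c+2\kappa}{\gamma}$ are well-defined and positive.

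For the lower bound I would compare $\omega^c_t$ with the solution $y$ of the linear ODE $\dot y=-\gamma y+\nu^c-2\kappa$, $y(0)=\omega^c_0$, which reads $y(t)=\tfrac{\nu^c-2\kappa}{\gamma}+\big(\omega^c_0-\tfrac{\nu^c-2\kappa}{\gamma}\big)e^{-\gamma t}$ and therefore satisfies $y(t)\ge\min\{\omega^c_0,\tfrac{\nu^c-2\kappa}{\gamma}\}$ for all $t\ge0$. Setting $u:=\omega^c-y$, the left differential inequality above gives $\dot u\ge-\gamma u$ with $u(0)=0$, hence $\tfrac{d}{dt}(e^{\gamma t}u)\ge0$ and so $u\ge0$, i.e. $\omega^c_t\ge y(t)$. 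A symmetric comparison against the solution $z$ of $\dot z=-\gamma z+\nu^c+2\kappa$, $z(0)=\omega^c_0$, using the right differential inequality, gives $\omega^c_t\le z(t)\le\max\{\omega^c_0,\tfrac{\nu^c+2\kappa}{\gamma}\}$. Together this is exactly the claimed uniform bound on $\omega^c_t$.

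Finally, the lower bound says $\omega^c_t\ge\min\{\omega^c_0,\tfrac{\nu^c-2\kappa}{\gamma}\}=:m>0$, so $\dot\theta^c_t=\omega^c_t\ge m$ for all $t\ge0$; hence $t\mapsto\theta^c_t$ is strictly increasing and $\theta^c_t\ge\theta^c_0+mt\to\infty$ as $t\to\infty$, so it is unbounded.

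I do not expect any real obstacle here: the only points needing a little care are the factorization of the interaction term under averaging (which decouples the double sum into a product of two empirical means) and phrasing the comparison so that it remains valid for a differential \emph{inequality} rather than an equality — for the latter the cleanest device is the integrating-factor identity $\tfrac{d}{dt}(e^{\gamma t}u)\ge0$ used above.
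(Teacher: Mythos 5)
Your proof is correct and follows essentially the same route as the paper: both bound the interaction term by $2\kappa$ and use the integrating factor $e^{\gamma t}$, the only cosmetic difference being that the paper integrates the Duhamel formula first and then bounds the integrand, while you bound first and then run the comparison argument.
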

	
	\begin{proof}
		Integrating the identity
		\begin{align*}
			\frac{d}{dt} \big( e^{\gamma t} \omega^i_t \big) =  e^{\gamma t} \bigg(\frac{d}{dt}\omega^i_t +\gamma\omega^i_t \bigg) = e^{\gamma t} \Big[ \nu^i -\kappa\mathcal I_c(\Theta_t) \sin\theta^i_t \Big]
		\end{align*}
		gives
		\begin{align*}
			\omega^i_t = \omega^i_0e^{-\gamma t} +e^{-\gamma t} \int_0^t e^{\gamma s} \Big[ \nu^i -\kappa\mathcal I_c(\Theta_s) \sin\theta^i_s \Big] ds.
		\end{align*}
		By summation over $i \in \{1,\dots,N\}$ and division by $N$, we obtain
		\begin{align*}
			\omega^c_t &= \omega^c_0e^{-\gamma t} +e^{-\gamma t} \int_0^t e^{\gamma s} \bigg[ \nu^c -\frac{\kappa \mathcal I_c(\Theta_s)}{N} \sum_{i=1}^N \sin\theta^i_s \bigg] ds \\
			&\geq \omega^c_0e^{-\gamma t} +e^{-\gamma t} \int_0^t e^{\gamma s} (\nu^c -2\kappa) ds \\
			&= \omega^c_0e^{-\gamma t} +\frac{\nu^c -2\kappa}{\gamma} (1-e^{-\gamma t}) \geq \min\bigg\{ \omega^c_0, ~\frac{\nu^c-2\kappa}{\gamma} \bigg\},
		\end{align*}
		Similarly, it follows
		\begin{align*}
			\omega^c_t &= \omega^c_0e^{-\gamma t} +e^{-\gamma t} \int_0^t e^{\gamma s} \bigg[ \nu^c -\frac{\kappa \mathcal I_c(\Theta_s)}{N} \sum_{i=1}^N \sin\theta^i_s \bigg] ds \\
			&\leq \omega^c_0e^{-\gamma t} +e^{-\gamma t} \int_0^t e^{\gamma s} (\nu^c +2\kappa) ds \\
			&= \omega^c_0e^{-\gamma t} +\frac{\nu^c +2\kappa}{\gamma} (1-e^{-\gamma t}) \leq \max \bigg\{ \omega^c_0, ~\frac{\nu^c+2\kappa}{\gamma} \bigg\}.
		\end{align*}
	\end{proof}

	\begin{lemma} \label{L1.2}
		Let $(\Theta_t, \Omega_t)$ be a global smooth solution of \eqref{sys:WFI}. Suppose there exist $T, D>0$ such that
		\begin{align*}
			\sup_{t\in[0, T]} \mathcal D(\Theta_t) \leq D.
		\end{align*}
		Then, we have
		\begin{align} \label{A-7}
		\sup_{t \in [0,T]}	\mathcal D(\Omega_t) \leq \max\bigg\{ \mathcal D(\Omega_0), ~\frac{\mathcal D(\nu)+2\kappa D}{\gamma} \bigg\} =: m_0 = m_0(\Omega_0,\nu,\kappa,D,\gamma).
		\end{align}
	\end{lemma}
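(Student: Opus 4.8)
The plan is to reproduce the Duhamel (variation-of-constants) argument from the proof of Lemma~\ref{L1}, but applied to the pairwise frequency differences $\omega^{ij}_t$ instead of to the average $\omega^c_t$.

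First I would fix an arbitrary pair $i,j$ and subtract the two $\omega$-equations in \eqref{sys:WFI}. Since the factor $\mathcal I_c(\Theta_t)=\frac1N\sum_k(1+\cos\theta^k_t)$ is common to the coupling terms of particles $i$ and $j$, it pulls out of the difference of the sines, giving
\[
\frac{d}{dt}\omega^{ij}_t = -\gamma\,\omega^{ij}_t + (\nu^i-\nu^j) - \kappa\,\mathcal I_c(\Theta_t)\big(\sin\theta^i_t-\sin\theta^j_t\big), \qquad t\in[0,T].
\]
This is the crucial reduction: the many-body coupling has collapsed to a single difference of sines, which the hypothesis controls. Indeed, $\sin$ is $1$-Lipschitz, so $|\sin\theta^i_t-\sin\theta^j_t|\le|\theta^{ij}_t|\le\mathcal D(\Theta_t)\le D$ on $[0,T]$; moreover $0\le\mathcal I_c(\Theta_t)\le 2$ (no sign cancellation is needed, and none is available); and $|\nu^i-\nu^j|\le\mathcal D(\nu)$. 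Hence the inhomogeneity above is bounded in modulus by $\mathcal D(\nu)+2\kappa D$, uniformly in $t\in[0,T]$ \emph{and} uniformly in the pair $(i,j)$.

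Next I would solve the scalar linear ODE by variation of constants,
\[
\omega^{ij}_t = \omega^{ij}_0 e^{-\gamma t} + \int_0^t e^{-\gamma(t-s)}\Big[(\nu^i-\nu^j)-\kappa\,\mathcal I_c(\Theta_s)\big(\sin\theta^i_s-\sin\theta^j_s\big)\Big]\,ds,
\]
and estimate, using $|\omega^{ij}_0|\le\mathcal D(\Omega_0)$, the bound on the inhomogeneity, and $\int_0^t e^{-\gamma(t-s)}\,ds=\gamma^{-1}(1-e^{-\gamma t})$,
\[
|\omega^{ij}_t| \le \mathcal D(\Omega_0)\,e^{-\gamma t} + \frac{\mathcal D(\nu)+2\kappa D}{\gamma}\,\big(1-e^{-\gamma t}\big), \qquad t\in[0,T].
\]
Since the right-hand side is a convex combination (with weights $e^{-\gamma t}$ and $1-e^{-\gamma t}$) of $\mathcal D(\Omega_0)$ and $\gamma^{-1}(\mathcal D(\nu)+2\kappa D)$, it is dominated by the larger of the two, i.e.\ by $m_0$, for every $t\in[0,T]$. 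Taking the maximum over all pairs $i,j$ then yields $\sup_{t\in[0,T]}\mathcal D(\Omega_t)\le m_0$, which is \eqref{A-7}.

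I do not expect a genuine obstacle here: once the differences $\omega^{ij}_t$ are formed, everything reduces to a one-line Grönwall-type estimate. The only points requiring care are (i) bounding the coupling term by its absolute value via $0\le\mathcal I_c\le 2$ rather than exploiting any cancellation, and (ii) keeping all estimates uniform in $(i,j)$ so that the final passage to the maximum is legitimate. As an alternative to the Duhamel route one could run the standard differential-inequality argument directly on the Lipschitz function $t\mapsto\mathcal D(\Omega_t)$, differentiating along a pair realizing the maximum to get $\frac{d}{dt}\mathcal D(\Omega_t)\le-\gamma\,\mathcal D(\Omega_t)+\mathcal D(\nu)+2\kappa D$ for a.e.\ $t$; I prefer the version above since it avoids invoking Dini derivatives and stays closest to the proof of Lemma~\ref{L1}.
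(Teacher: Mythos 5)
Your proposal is correct and follows essentially the same route as the paper: subtract the $\omega$-equations, bound the inhomogeneity by $\mathcal D(\nu)+2\kappa D$ using $0\le\mathcal I_c\le 2$ and the Lipschitz bound $|\sin\theta^i_t-\sin\theta^j_t|\le D$, and integrate with the factor $e^{\gamma t}$ (your Duhamel formula is the same computation) to get the convex-combination bound dominated by $m_0$. The paper even skips the absolute values and just bounds $\omega^{ij}_t$ from above for every ordered pair, which already controls the diameter, but this is only a cosmetic difference.
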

	
	\begin{proof}
	It is
		\begin{align*}
			\frac{d}{dt} \big( e^{\gamma t} \omega^{ij}_t \big) &= e^{\gamma t} \bigg( \frac{d}{dt}\omega^{ij}_t +\gamma\omega^{ij}_t \bigg) \\
			&= e^{\gamma t} \big[ \nu^{ij} -\kappa\mathcal I_c(\Theta_t) \big(\sin\theta^i_t -\sin\theta^j_t \big) \big] \leq e^{\gamma t} (\mathcal D(\nu)+2\kappa D), \quad t\in(0, T),
		\end{align*}
	and integrating yields
		\begin{align*}
			\omega^{ij}_t \leq e^{-\gamma t} \mathcal D(\Omega_0) +\frac{\mathcal D(\nu)+2\kappa D}{\gamma} (1 -e^{-\gamma t}) \leq \max\bigg\{ \mathcal D(\Omega_0), ~\frac{\mathcal D(\nu)+2\kappa D}{\gamma} \bigg\}, \quad t\in(0, T).
		\end{align*}
	\end{proof}
	Using the previous lemma, we obtain the following refined estimates.
	\begin{lemma} \label{L3}
		Let $(\Theta_t, \Omega_t)$ be a global smooth solution of \eqref{sys:WFI}. Suppose there exist $T, D>0$ such that
		\begin{align*}
			\sup_{t\in[0, T]} \mathcal D(\Theta_t) \leq D.
		\end{align*}
		Then, for all $i, j\in\{1,\cdots,N\}$ and $t\in(0, T)$, we have
		\begin{align*}
			& \frac{d}{dt}\big( \omega^{ij}_t +\gamma\theta^{ij}_t \big) \leq \mathcal D(\nu) +\frac{2\kappa}{\gamma} m_0+3\kappa D^2 -\frac{\kappa}{\gamma} \cos\theta^c_t \big( 1+\cos\theta^c_t \big) \big( \omega^{ij}_t +\gamma \theta^{ij}_t \big), \\
			& \bigg| \frac{d}{dt}\big( \omega^{ij}_t +\gamma\theta^{ij}_t \big) \bigg| \leq \mathcal D(\nu) +\frac{4\kappa}{\gamma} m_0 +3\kappa D^2 +2\kappa D,
		\end{align*}
		where $m_0$ is defined in \eqref{A-7}.
	\end{lemma}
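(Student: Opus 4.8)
The plan is to introduce the quantity $u^{ij}_t := \omega^{ij}_t + \gamma\theta^{ij}_t$ appearing in the statement and derive a closed differential inequality for it directly from \eqref{sys:WFI}. Subtracting the $\omega$-equations for the indices $i$ and $j$ gives
\[
\frac{d}{dt}\omega^{ij}_t = -\gamma\omega^{ij}_t + \nu^{ij} - \kappa\mathcal I_c(\Theta_t)\big(\sin\theta^i_t - \sin\theta^j_t\big),
\]
and since $\frac{d}{dt}\theta^{ij}_t = \omega^{ij}_t$, the friction term $-\gamma\omega^{ij}_t$ is exactly cancelled by $\gamma\frac{d}{dt}\theta^{ij}_t$, leaving
\[
\frac{d}{dt}u^{ij}_t = \nu^{ij} - \kappa\mathcal I_c(\Theta_t)\big(\sin\theta^i_t - \sin\theta^j_t\big).
\]
This cancellation is the one structural observation; the rest is careful bookkeeping of errors in $D$.

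Next I would linearize the nonlinearity around the mean phase $\theta^c_t$. Writing $\sin\theta^i_t - \sin\theta^j_t = \int_{\theta^j_t}^{\theta^i_t}\cos s\,ds = \theta^{ij}_t\cos\theta^c_t + \rho^{ij}_t$ and using $|\theta^i_t - \theta^c_t|,\,|\theta^j_t - \theta^c_t| \le \mathcal D(\Theta_t) \le D$ together with the $1$-Lipschitz bound on $\cos$, one gets $|\rho^{ij}_t| \le |\theta^{ij}_t|\,D \le D^2$. Since $0 \le \mathcal I_c(\Theta_t) \le 2$ pointwise, the term $\kappa\mathcal I_c(\Theta_t)\rho^{ij}_t$ is at most $2\kappa D^2$ in modulus. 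In the remaining piece $\kappa\mathcal I_c(\Theta_t)\cos\theta^c_t\,\theta^{ij}_t$ I then replace $\mathcal I_c(\Theta_t)$ by $1+\cos\theta^c_t$; since $|\mathcal I_c(\Theta_t) - (1+\cos\theta^c_t)| \le \frac1N\sum_i|\cos\theta^i_t - \cos\theta^c_t| \le D$, this costs at most $\kappa D^2$. Altogether
\[
\kappa\mathcal I_c(\Theta_t)\big(\sin\theta^i_t - \sin\theta^j_t\big) = \kappa\cos\theta^c_t\big(1+\cos\theta^c_t\big)\,\theta^{ij}_t + E^{ij}_t,\qquad |E^{ij}_t| \le 3\kappa D^2 .
\]
Finally I convert $\theta^{ij}_t$ back into $u^{ij}_t$ via $\theta^{ij}_t = \gamma^{-1}(u^{ij}_t - \omega^{ij}_t)$, so that
\[
\kappa\cos\theta^c_t\big(1+\cos\theta^c_t\big)\theta^{ij}_t = \frac{\kappa}{\gamma}\cos\theta^c_t\big(1+\cos\theta^c_t\big)u^{ij}_t - \frac{\kappa}{\gamma}\cos\theta^c_t\big(1+\cos\theta^c_t\big)\omega^{ij}_t ,
\]
and the last summand is bounded in modulus by $\frac{2\kappa}{\gamma}|\omega^{ij}_t| \le \frac{2\kappa}{\gamma}m_0$ by Lemma \ref{L1.2} and $|\cos\theta^c_t(1+\cos\theta^c_t)| \le 2$. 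Inserting this together with $\nu^{ij} \le \mathcal D(\nu)$ into the identity for $\frac{d}{dt}u^{ij}_t$ yields the first asserted inequality.

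For the second inequality I would start from the first one and further estimate $|\cos\theta^c_t(1+\cos\theta^c_t)u^{ij}_t| \le 2|u^{ij}_t| \le 2(|\omega^{ij}_t| + \gamma|\theta^{ij}_t|) \le 2(m_0+\gamma D)$, turning the $-\tfrac{\kappa}{\gamma}\cos\theta^c_t(1+\cos\theta^c_t)u^{ij}_t$ term into an extra $\tfrac{2\kappa}{\gamma}m_0 + 2\kappa D$, which gives the upper bound $\mathcal D(\nu) + \tfrac{4\kappa}{\gamma}m_0 + 3\kappa D^2 + 2\kappa D$. Applying the first inequality to the pair $(j,i)$ (for which $\nu^{ji} \le \mathcal D(\nu)$ still holds and $u^{ji}_t = -u^{ij}_t$) and the same estimate gives the matching lower bound, hence the absolute-value estimate.

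I do not expect a genuine obstacle here; the only point where a careless argument loses the stated constant is the bound on $E^{ij}_t$: one must first peel off the remainder $\rho^{ij}_t$ using $\mathcal I_c(\Theta_t) \le 2$ and only then replace $\mathcal I_c(\Theta_t)$ by $1+\cos\theta^c_t$ in the linear term, rather than expanding both approximations simultaneously, which would introduce a superfluous $O(\kappa D^3)$ cross term.
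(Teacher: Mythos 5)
Your proposal is correct and follows essentially the same route as the paper: the same cancellation of the friction term in $\frac{d}{dt}(\omega^{ij}_t+\gamma\theta^{ij}_t)$, the same linearization around $\theta^c_t$ with the errors grouped exactly as in the paper (remainder of $\sin\theta^i_t-\sin\theta^j_t$ against $\mathcal I_c\leq 2$, then $\mathcal I_c\mapsto 1+\cos\theta^c_t$ against $|\cos\theta^c_t|\leq 1$) to get $3\kappa D^2$, and the same use of Lemma \ref{L1.2} for the $m_0$-terms. The derivation of the absolute-value bound from the first inequality (including the symmetry argument in $(i,j)$) is likewise what the paper intends.
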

	
	\begin{proof}
		We differentiate $\omega^{ij}_t +\gamma\theta^{ij}_t$ to obtain
		\begin{align*}
			\frac{d}{dt}\big( \omega^{ij}_t +\gamma\theta^{ij}_t \big) &= \nu^{ij} -\kappa\mathcal I_c(\Theta_t) \big( \sin\theta^i_t -\sin\theta^j_t \big) \\
			&= \nu^{ij} -\kappa \cos\theta^c_t \big( 1+\cos\theta^c_t \big) \bigg( \frac{\omega^{ij}_t}{\gamma} +\theta^{ij}_t \bigg) +\frac{\kappa}{\gamma} \omega^{ij}_t \cos\theta^c_t \big( 1+\cos\theta^c_t \big) \\
			&\hspace{.2cm} +\kappa \theta^{ij}_t \cos\theta^c_t \big( 1+\cos\theta^c_t \big) -\kappa\mathcal I(\Theta_t) \big( \sin\theta^i_t -\sin\theta^j_t \big).
		\end{align*}
		It follows from
		\begin{align*}
			& \kappa \theta^{ij}_t \cos\theta^c_t \big( 1+\cos\theta^c_t \big) -\kappa\mathcal I(\Theta_t) \big( \sin\theta^i_t -\sin\theta^j_t \big) \\
			&\hspace{.2cm} \leq \kappa\theta^{ij}_t \cos\theta^c_t \big( 1+\cos\theta^c_t -\mathcal I(\Theta_t) \big) -\kappa\mathcal I(\Theta_t) \big( \sin\theta^i_t -\sin\theta^j_t -\theta^{ij}_t \cos\theta^c_t \big) \\
			&\hspace{.2cm} \leq \frac{\kappa D}{N} \sum_{k=1}^N \big| \cos\theta^c_t -\cos\theta^k_t \big| +2\kappa D^2 \leq 3\kappa D^2, \quad t\in(0, T),
		\end{align*}
		and Lemma \ref{L1.2} that
		\begin{align*}
			\frac{d}{dt}\big( \omega^{ij}_t +\gamma\theta^{ij}_t \big) &\leq \mathcal D(\nu) -\kappa \cos\theta^c_t \big( 1+\cos\theta^c_t \big) \bigg( \frac{\omega^{ij}_t}{\gamma} +\theta^{ij}_t \bigg) +\frac{2\kappa}{\gamma} m_0 +3\kappa D^2, \quad t\in(0, T).
		\end{align*}
		Then, the second estimate follows directly from Lemma \ref{L1.2}.
	\end{proof}
	
	By Lemma \ref{L1} and smoothness of $\theta^c_t$, $(\theta_t^c)^{-1}$ is differentiable. We set
	\begin{align*}
	R^{ij}_{t}:= \omega^{ij}_t +\gamma\theta^{ij}_t, \quad \mu(t):= \theta^c_t,\quad \tilde R^{ij}(r):= R^{ij}\circ \mu^{-1}(r),
	\end{align*}
i.e. in particular $\tilde R^{ij}(\theta^c_t)= R^{ij}_t$.
	Then, one can rephrase the previous lemma in terms of $\tilde R^{ij}$:
	
	\begin{lemma} \label{L5}
		Suppose the initial data and system parameters satisfy
		\begin{align} \label{A-4}
			0 < \nu^c-2\kappa \leq \gamma\omega^c_0 \leq \nu^c+2\kappa
		\end{align}
		and let $(\Theta_t, \Omega_t)$ be a global smooth solution of \eqref{sys:WFI}. Suppose there exist $T, D>0$ such that
		\begin{align*}
			\sup_{t\in[0, T]} \mathcal D(\Theta_t) \leq D.
		\end{align*}
	Then, for all $i, j \in \{1, \cdots, N\}$ and $r\in\big(\mu(0), \mu(T)\big)$, we have
		\begin{align} \label{A-9}
			\begin{aligned}
				& \frac{d\tilde R^{ij}}{dr}(r) \leq \alpha_D -\beta(r) \tilde R^{ij}(r), \quad \mbox{where} \\
				& \beta(r) := \frac{\kappa}{\nu^c} \cos r \big( 1+\cos r \big), \\
				& \alpha_D := \frac{\gamma \mathcal D(\nu) +2\kappa m_0 +3\gamma\kappa D^2}{\nu^c} +\frac{2\gamma^2\kappa\mathcal D(\nu) +8\gamma\kappa^2 m_0 +6\gamma^2\kappa^2 D^2 +4\gamma^2\kappa^2 D}{\nu^c(\nu^c-2\kappa)}.
			\end{aligned}
		\end{align}
		where $m_0$ is defined in \eqref{A-7}.
	\end{lemma}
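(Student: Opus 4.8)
The plan is to read the differential inequality for $\tilde R^{ij}$ off Lemma \ref{L3} through the time change $r=\mu(t)=\theta^c_t$. First I would note that \eqref{A-4} implies both hypotheses $\omega^c_0>0$ and $\kappa<\nu^c/2$ of Lemma \ref{L1}, and moreover pins $\gamma\omega^c_0$ into $[\nu^c-2\kappa,\nu^c+2\kappa]$, so that the $\min$ and $\max$ in Lemma \ref{L1} are attained at $(\nu^c\mp 2\kappa)/\gamma$; hence
\[
\frac{\nu^c-2\kappa}{\gamma}\le \omega^c_t\le \frac{\nu^c+2\kappa}{\gamma},\qquad t\ge 0.
\]
In particular $\mu=\theta^c_\bullet$ is a $C^1$, strictly increasing bijection of $[0,\infty)$ onto $[\mu(0),\infty)$ with $\mu'=\omega^c>0$, so for $r\in(\mu(0),\mu(T))$ we have $t:=\mu^{-1}(r)\in(0,T)$ and the chain rule gives $\frac{d\tilde R^{ij}}{dr}(r)=\frac{1}{\omega^c_t}\frac{dR^{ij}_t}{dt}$, with $\tilde R^{ij}(r)=R^{ij}_t$ and $\theta^c_t=r$.

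Next I would insert Lemma \ref{L3}. Abbreviating $c(r):=\cos r\,(1+\cos r)$ and $A:=\mathcal D(\nu)+\tfrac{2\kappa}{\gamma}m_0+3\kappa D^2$, the first estimate of Lemma \ref{L3} reads $\frac{dR^{ij}_t}{dt}+\tfrac{\kappa}{\gamma}c(r)R^{ij}_t\le A$ for $t\in(0,T)$; multiplying by $1/\omega^c_t>0$ and rearranging around the target $\beta(r)=\tfrac{\kappa}{\nu^c}c(r)$ yields
\[
\frac{d\tilde R^{ij}}{dr}(r)+\beta(r)\tilde R^{ij}(r)\ \le\ \frac{A}{\omega^c_t}+\kappa\,c(r)\,R^{ij}_t\Big(\frac{1}{\nu^c}-\frac{1}{\gamma\omega^c_t}\Big).
\]
It then remains to bound the right-hand side by $\alpha_D$ uniformly in $r$. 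For the first summand, $A\ge 0$ and $\omega^c_t\ge(\nu^c-2\kappa)/\gamma$ together with $\tfrac{1}{\gamma\omega^c_t}=\tfrac1{\nu^c}+\tfrac{\nu^c-\gamma\omega^c_t}{\nu^c\gamma\omega^c_t}$ and $|\nu^c-\gamma\omega^c_t|\le 2\kappa$ give $\tfrac{A}{\omega^c_t}\le \tfrac{\gamma A}{\nu^c}+\tfrac{2\kappa\gamma A}{\nu^c(\nu^c-2\kappa)}$. For the second summand, Lemma \ref{L1.2} and the hypothesis $\sup_{[0,T]}\mathcal D(\Theta_\bullet)\le D$ give $|R^{ij}_t|\le |\omega^{ij}_t|+\gamma|\theta^{ij}_t|\le m_0+\gamma D$, while $|c(r)|\le 2$ and $\big|\tfrac1{\nu^c}-\tfrac1{\gamma\omega^c_t}\big|\le\tfrac{2\kappa}{\nu^c(\nu^c-2\kappa)}$, so this summand is at most $\tfrac{4\kappa^2(m_0+\gamma D)}{\nu^c(\nu^c-2\kappa)}$. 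Substituting the value of $A$ and collecting all terms over the common denominator $\nu^c(\nu^c-2\kappa)$ produces the constant $\alpha_D$ of \eqref{A-9}.

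The one point that needs care is the rearrangement in the second paragraph: since $c(r)=\cos r\,(1+\cos r)$ changes sign on $[0,2\pi]$ and $\tilde R^{ij}(r)$ has no definite sign either, one cannot estimate the product $c(r)R^{ij}_t$ without destroying the dissipative $-\beta\tilde R^{ij}$ structure — precisely the structure (with $\int_0^{2\pi}\beta(s)\,ds=\kappa\pi/\nu^c>0$) that makes Lemma \ref{L4} applicable downstream. The remedy is to leave the product untouched and split only the \emph{coefficient} $\tfrac1{\gamma\omega^c_t}$ into its principal part $\tfrac1{\nu^c}$ — which reconstitutes exactly $-\beta(r)\tilde R^{ij}(r)$ — and a residual carrying the small factor $|\nu^c-\gamma\omega^c_t|\le 2\kappa$ furnished by Lemma \ref{L1}; that residual, and the analogous one in $\tfrac{A}{\omega^c_t}$, are absorbed into $\alpha_D$. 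Everything else is routine bookkeeping of constants, and nothing beyond Lemmas \ref{L1}, \ref{L1.2} and \ref{L3} is required.
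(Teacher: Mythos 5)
Your proof is correct and rests on the same ingredients as the paper's (the time change $r=\mu(t)$, Lemma \ref{L1} together with \eqref{A-4} to get $|\gamma\omega^c_t-\nu^c|\le 2\kappa$, and the estimates of Lemma \ref{L3}), but it organizes the perturbation around $\omega^c_t\approx\nu^c/\gamma$ differently. The paper first derives a crude a priori bound on $\big|\tfrac{d\tilde R^{ij}}{dr}\big|$ from the second estimate of Lemma \ref{L3}, then uses the identity $\tfrac{\nu^c}{\gamma}\tfrac{d\tilde R^{ij}}{dr}=\tfrac{dR^{ij}_t}{dt}+\tfrac{d\tilde R^{ij}}{dr}\big(\tfrac{\nu^c}{\gamma}-\omega^c_t\big)$, so its residual is $\tfrac{d\tilde R^{ij}}{dr}\cdot O(\kappa/\gamma)$; you instead divide the first estimate of Lemma \ref{L3} by $\omega^c_t$ and split only the coefficient $\tfrac{1}{\gamma\omega^c_t}=\tfrac1{\nu^c}+O(\kappa)$ in front of $\cos r\,(1+\cos r)\,R^{ij}_t$, so your residual is $c(r)R^{ij}_t\cdot O(\kappa)$, controlled by $|R^{ij}_t|\le m_0+\gamma D$ (via Lemma \ref{L1.2} and the phase-diameter hypothesis) rather than by a derivative bound. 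Both routes preserve the dissipative term $-\beta(r)\tilde R^{ij}(r)$ exactly --- the point you correctly identify as the crux --- and, after collecting terms, both yield the identical constant
$\tfrac{\gamma\mathcal D(\nu)+2\kappa m_0+3\gamma\kappa D^2}{\nu^c}+\tfrac{2\gamma\kappa\mathcal D(\nu)+8\kappa^2 m_0+6\gamma\kappa^2D^2+4\gamma\kappa^2D}{\nu^c(\nu^c-2\kappa)}$. One caveat on your closing claim that collecting terms ``produces the constant $\alpha_D$ of \eqref{A-9}'': this is also exactly what the paper's own proof produces, yet it differs from the \emph{displayed} $\alpha_D$ by a factor $\gamma$ in the second fraction (the statement has $2\gamma^2\kappa\mathcal D(\nu)+8\gamma\kappa^2 m_0+\dots$). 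This appears to be a typo in the lemma statement rather than an error on your part; it is harmless when $\gamma\ge1$ (as in the paper's example) but means the displayed $\alpha_D$ undershoots the derived bound when $\gamma<1$.
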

	
	\begin{proof}
		By Lemma \ref{L1} and \eqref{A-4}, we have
		\begin{align*}
			\bigg| \omega^c_t -\frac{\nu^c}{\gamma} \bigg| \leq \frac{2\kappa}{\gamma}, \quad t\geq0.
		\end{align*}
		Together with Lemma \ref{L3}, this yields
		\begin{align*}
			\frac{d\tilde R^{ij}}{dr}(\mu(t)) = \frac{dR^{ij}_t}{dt}\bigg(\frac{d\mu}{dt}(t)\bigg)^{-1} \leq \frac{\gamma\mathcal D(\nu) +4\kappa m_0 +3\gamma\kappa D^2 +2\gamma\kappa D}{\nu^c-2\kappa}, \quad t\in\big(0,T\big).
		\end{align*}
		Thus, in combination with Lemma \ref{L3}, we conclude
		\begin{align*}
			& \frac{\nu^c}{\gamma}\frac{d\tilde R^{ij}}{dr} (\mu(t))= \frac{dR^{ij}_t}{dt} +\frac{d\tilde R^{ij}}{dr}(\mu(t)) \bigg( \frac{\nu^c}{\gamma} -\frac{d\mu}{dt} (t)\bigg) \\
			&\hspace{.5cm} \leq \mathcal D(\nu) +\frac{2\kappa}{\gamma} m_0 +3\kappa D^2 -\frac{\kappa}{\gamma} \cos\mu(t) \big( 1+\cos\mu(t) \big) \tilde R^{ij}(\mu(t)) \\
			&\hspace{.7cm} +\frac{2\gamma\kappa\mathcal D(\nu) +8\kappa^2 m_0 +6\gamma\kappa^2 D^2 +4\gamma\kappa^2 D}{\gamma(\nu^c-2\kappa)}, \quad t\in\big(0,T\big),
		\end{align*}
		which implies our desired result, since $\{\mu(t): t \in (0,T)\} = (\mu(0),\mu(T))$.
	\end{proof}

For the formulation of our first main result, we define the following constants.
	\begin{align}
\label{def-L-det}	L &:= \frac{\exp\big( -\int_0^{2\pi} \beta^+(s) ds \big)}{1-\exp\big(-\int_0^{2\pi} \beta(s) ds \big)} = \frac{\exp\Big( -\frac{(4+\pi)\kappa}{2\nu^c} \Big)}{1-\exp \big( -\frac{\kappa\pi}{\nu^c} \big)}, \\ \label{def-R-det}
		R &:= \frac{\exp\big( \int_0^{2\pi} \beta^-(s) ds \big)}{1-\exp\big(-\int_0^{2\pi} \beta(s) ds \big)} = \frac{\exp \Big( \frac{(4-\pi)\kappa}{2\nu^c} \Big)}{1-\exp \big( -\frac{\kappa\pi}{\nu^c} \big)}.
	\end{align}
	
	\begin{theorem} \label{thm:det_pl}
		Supposet there exists a constant $D > 0$ such that the initial data and system parameters satisfy
		\begin{align} \label{A-5}
			\begin{aligned}
				0 < \nu^c-2\kappa \leq \gamma\omega^c_0 \leq \nu^c+2\kappa, \quad 2\pi R \alpha_D < \gamma D, \quad \mathcal D(\Theta_0) < D, \quad \mathcal D(\Omega_0 +\gamma\Theta_0) \leq 2\pi L \alpha_D,
			\end{aligned}
		\end{align}
		where $\alpha_D$ is defined in \eqref{A-9}, and let $(\Theta_t, \Omega_t)$ be a global smooth solution of \eqref{sys:WFI}. Then, $\mathcal D(\Theta_t)$ is uniformly bounded:
		\begin{align*}
		\sup_{t \geq 0}	\mathcal D(\Theta_t) \leq D.
		\end{align*}
	\end{theorem}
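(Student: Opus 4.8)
The plan is to run a continuity (bootstrap) argument on the quantity $\mathcal D(\Theta_t)$. Define $\mathcal T := \{ T \geq 0 : \sup_{t \in [0,T]} \mathcal D(\Theta_t) \leq D \}$. Since $\mathcal D(\Theta_0) < D$ and $t \mapsto \mathcal D(\Theta_t)$ is continuous, $\mathcal T$ contains a neighbourhood of $0$ and is closed, so it suffices to show it is open, i.e. that on any interval $[0,T]$ with $\sup_{t \in [0,T]} \mathcal D(\Theta_t) \leq D$ one in fact has the strict bound $\sup_{t \in [0,T]} \mathcal D(\Theta_t) < D$. This is where the hypotheses $2\pi R \alpha_D < \gamma D$ and $\mathcal D(\Omega_0 + \gamma \Theta_0) \leq 2\pi L \alpha_D$ come in.

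So fix such a $T$. By Lemma \ref{L5}, for each pair $i,j$ the rescaled function $\tilde R^{ij}$ satisfies the differential inequality $\frac{d\tilde R^{ij}}{dr}(r) \leq \alpha_D - \beta(r)\tilde R^{ij}(r)$ on $(\mu(0),\mu(T))$, with $\beta(r) = \frac{\kappa}{\nu^c}\cos r(1+\cos r)$, which is smooth, $2\pi$-periodic, and has $\int_0^{2\pi}\beta(s)\,ds = \frac{\kappa \pi}{\nu^c} > 0$. I would compare $\tilde R^{ij}$ with the unique positive $2\pi$-periodic solution of the associated \emph{equality} $\frac{dx}{dr} = \alpha_D - \beta(r)x$ furnished by Lemma \ref{L4}. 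Writing $x_\ast$ for that periodic solution, Lemma \ref{L4} gives $2\pi L \alpha_D \leq x_\ast(r) \leq 2\pi R \alpha_D$ for all $r$ (this is exactly the role of the constants $L,R$ in \eqref{def-L-det}--\eqref{def-R-det}, obtained by plugging $\alpha = \alpha_D$ into the bounds of Lemma \ref{L4} and evaluating $\int_0^{2\pi}\beta^\pm$). A standard Grönwall/comparison argument — set $h := \tilde R^{ij} - x_\ast$, so $\frac{dh}{dr} \leq -\beta(r) h$, hence $h(r) \leq h(\mu(0)) \exp\big(-\int_{\mu(0)}^r \beta(s)\,ds\big)$ — shows that once $\tilde R^{ij}$ starts below $x_\ast$ it can never exceed it by more than the (decaying) contribution of the initial gap; more carefully, one gets $\tilde R^{ij}(r) \leq \max\{\tilde R^{ij}(\mu(0)), \sup_s x_\ast(s)\} \leq \max\{R^{ij}_0, 2\pi R\alpha_D\}$. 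Since $R^{ij}_0 = \omega^{ij}_0 + \gamma\theta^{ij}_0$ and $\mathcal D(\Omega_0 + \gamma\Theta_0) \leq 2\pi L \alpha_D \leq 2\pi R \alpha_D$, the initial data lie within the periodic band, and one concludes $\tilde R^{ij}(r) \leq 2\pi R \alpha_D$, i.e. $R^{ij}_t = \omega^{ij}_t + \gamma\theta^{ij}_t \leq 2\pi R \alpha_D$ for all $t \in [0,T]$. By symmetry (applying the same to the pair $(j,i)$, or noting $-\tilde R^{ij} = \tilde R^{ji}$ satisfies the same type of inequality), $|R^{ij}_t| \leq 2\pi R \alpha_D$, so $\mathcal D(\Omega_t + \gamma\Theta_t) \leq 2\pi R\alpha_D$ on $[0,T]$.

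The final step converts the bound on $\omega^{ij}_t + \gamma\theta^{ij}_t$ into the desired strict bound on $\theta^{ij}_t$. From $\frac{d}{dt}\theta^{ij}_t = \omega^{ij}_t = (\omega^{ij}_t + \gamma\theta^{ij}_t) - \gamma\theta^{ij}_t$, we get $\frac{d}{dt}\theta^{ij}_t \leq 2\pi R\alpha_D - \gamma\theta^{ij}_t$ (and symmetrically a lower bound $\geq -2\pi R\alpha_D - \gamma\theta^{ij}_t$), a linear scalar ODE inequality. Grönwall gives $\theta^{ij}_t \leq e^{-\gamma t}\theta^{ij}_0 + \frac{2\pi R\alpha_D}{\gamma}(1 - e^{-\gamma t}) \leq \max\{\theta^{ij}_0, \tfrac{2\pi R\alpha_D}{\gamma}\}$, and likewise from below, so $\mathcal D(\Theta_t) \leq \max\{\mathcal D(\Theta_0), \tfrac{2\pi R\alpha_D}{\gamma}\}$. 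By hypothesis $2\pi R\alpha_D < \gamma D$, i.e. $\tfrac{2\pi R\alpha_D}{\gamma} < D$, and $\mathcal D(\Theta_0) < D$, so indeed $\sup_{t \in [0,T]}\mathcal D(\Theta_t) < D$, closing the bootstrap. The main obstacle — and the point that needs care rather than routine computation — is the comparison step with the periodic solution of Lemma \ref{L4}: one must make sure the differential inequality (not equality) for $\tilde R^{ij}$, together with initial data already inside the periodic band $[2\pi L\alpha_D, 2\pi R\alpha_D]$, forces $\tilde R^{ij}$ to stay at or below the \emph{upper} envelope $2\pi R\alpha_D$ for all $r$, uniformly, and to handle the possibility that $\mu(T)$ is only finite (so one is comparing on a finite $r$-interval, which is harmless since the decay estimate for $h$ is monotone in $r$). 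Keeping track of the change of variables $r = \mu(t) = \theta^c_t$ — legitimate because Lemma \ref{L1} guarantees $\theta^c_t$ is strictly increasing and unbounded — is the other bookkeeping point.
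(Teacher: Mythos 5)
Your proposal is correct and follows essentially the same route as the paper: a continuity/bootstrap argument combined with the differential inequality of Lemma \ref{L5}, comparison with the periodic solution of Lemma \ref{L4} to obtain $R^{ij}_t \leq 2\pi R\alpha_D$ on the bootstrap interval, and the linear ODE estimate $\theta^{ij}_t \leq \max\{\mathcal D(\Theta_0), 2\pi R\alpha_D/\gamma\} < D$; you in fact spell out the comparison step (via $h = \tilde R^{ij} - x_\ast$ and $h(\mu(0)) \leq 2\pi L\alpha_D - \inf_r x_\ast(r) \leq 0$) that the paper leaves implicit. The only loose spot is the side remark $\tilde R^{ij}(r) \leq \max\{\tilde R^{ij}(\mu(0)), \sup_s x_\ast(s)\}$, which does not follow from your Gr\"onwall bound when $h(\mu(0))>0$ (since $\beta$ changes sign, the factor $\exp(-\int\beta)$ can exceed $1$), but this is never needed because the hypothesis $\mathcal D(\Omega_0+\gamma\Theta_0)\leq 2\pi L\alpha_D$ places you in the case $h(\mu(0))\leq 0$.
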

	
	\begin{proof}
		We define the following temporal set to use a contradiction argument:
		\begin{align*}
			\mathcal T := \big\{ s\geq0: \mathcal D(\Theta_s) < D, \big\}
		\end{align*}
		Since $\big| \theta^{ij}_t \big|$ is continuous, by \eqref{A-5} $\mathcal T$ is nonempty. Suppose that
		\begin{align*}
			T := \sup\mathcal T < \infty.
		\end{align*}
		Then, for arbitrary $i, j \in \{1,\cdots,N\}$, Lemma \ref{L5} implies
		\begin{align*}
		\frac{d\tilde R^{ij}}{dr}(r) \leq \alpha_D -\beta(r) \tilde R^{ij}(r), \quad r \in (\mu(0),\mu(T)),
		\end{align*}
	and since
		\begin{align*}
			\tilde R^{ij}(\mu(0))&= R^{ij}_0 \leq \mathcal D(\Omega_0 +\gamma\Theta_0) \leq 2\pi L \alpha_D,
		\end{align*}
		comparing with Lemma \ref{L4} gives
		\begin{align*}
			\omega^{ij}_t +\gamma\theta^{ij}_t = R^{ij}_t \leq 2\pi R \alpha_D, \quad  t\in (0, T).
		\end{align*}
		Then, since $\omega^{ij}_t = \frac{d}{dt}\theta^{ij}_t$, direct calculus yields
		\begin{align*}
			\theta^{ij}_t \leq e^{-\gamma t}\theta^{ij}_0 +(1-e^{-\gamma t}) \frac{2\pi R \alpha_D}{\gamma} \leq \max\bigg\{ \mathcal D(\Theta_0), ~\frac{2\pi R \alpha_D}{\gamma} \bigg\} < D,
		\end{align*}
		and this implies, via continuity of $\theta^{ij}$, $\mathcal D(\Theta_T) < D$, which contradicts the assumed finiteness of $T$.
	\end{proof}
	\begin{remark}
	Suppose the rotation number $\rho$ of one oscillator, say $\theta^1$, exists. Then, for any $j \in \{1,\dots,N\}$, Theorem \ref{thm:det_pl} implies
			\begin{align*}
				\limsup_{t \to \infty} \bigg| \rho - \frac{\theta^j_t}{t} \bigg| \leq \limsup_{t\to\infty} \frac{|\theta^1_t -\theta^j_t|}{t}  = 0,
			\end{align*}
		i.e. the rotation number of each oscillator exists and coincides with $\rho$.
			Lemma \ref{L1} further implies
			\begin{align*}
				\rho =  \lim_{t\to\infty} \frac{\theta^c_t}{t} = \lim_{t\to\infty} \frac{1}{t} \int_0^t \omega^c_s ds \geq \frac{\nu^c-2\kappa}{\gamma} > 0.
			\end{align*}
			Therefore, in the situation of Theorem \ref{thm:det_pl}, under the additional assumption that one rotation number exists, $\Theta_t$ converges towards a complete phase-locked state.
		\end{remark}
		
		We conclude this section with an example of admissible initial data and system parameters satisfying \eqref{A-5}. To this end, fix $\gamma, \nu^c > 0$ and $D\in(0, 0.1)$, and suppose
			$$\mathcal D(\nu) = \mathcal D(\Omega_0) = 0, \quad \omega^c_0 = \frac{\nu^c}{\gamma}.$$
			Note
			\begin{align}
			\notag	\lim_{\kappa\to0+} \kappa R &= \lim_{\kappa\to0+} \frac{\kappa}{\exp \Big( \frac{(\pi-4)\kappa}{2\nu^c} \Big)-\exp \Big(  \frac{(-\pi-4)\kappa}{2\nu^c} \Big)} \\ \label{calc-R}
				&= \lim_{\kappa\to0+} \frac{1}{\frac{(\pi-4)}{2\nu^c}\exp \Big( \frac{(\pi-4)\kappa}{2\nu^c} \Big) +\frac{(\pi+4)}{2\nu^c} \exp \Big(  \frac{(-\pi-4)\kappa}{2\nu^c} \Big)} = \frac{\nu^c}{\pi}.
			\end{align}
			Hence, there is $\kappa\in(0, \nu^c/2)$ sufficiently small such that
			\begin{align*}
				\kappa R < \frac{5\nu^c}{3\pi}, \quad \kappa \bigg( \frac{20D}{3\gamma^2} +\frac{160\kappa D}{3\gamma(\nu^c-2\kappa)} +\frac{20\gamma D^2}{\nu^c-2\kappa} +\frac{40\gamma D}{3(\nu^c-2\kappa)} \bigg) \leq D-10D^2.
			\end{align*}
			Therefore, we obtain
			\begin{align*}
				\frac{2\pi R \alpha_D}{\gamma} &= \frac{2\pi \kappa R\alpha_D}{\gamma \kappa} \\
				&< \frac{10\nu^c}{3\gamma} \bigg( \frac{2\kappa D}{\gamma\nu^c} +\frac{3\gamma D^2}{\nu^c} +\frac{16\kappa^2 D}{\nu^c(\nu^c-2\kappa)} +\frac{6\gamma^2\kappa D^2}{\nu^c(\nu^c-2\kappa)} +\frac{4\gamma^2\kappa D}{\nu^c(\nu^c-2\kappa)} \bigg) \\
				&= 10 D^2 +\kappa \bigg( \frac{20D}{3\gamma^2} +\frac{160\kappa D}{3\gamma(\nu^c-2\kappa)} +\frac{20\gamma D^2}{\nu^c-2\kappa} +\frac{40\gamma D}{3(\nu^c-2\kappa)} \bigg) \leq D.
			\end{align*}
			It is easy to see that at the same time the remaining estimates of \eqref{A-5} can be satisfied as well. Indeed, it is sufficient to choose $\mathcal{D}(\Omega_0)$ sufficiently small in terms of $D, L, \alpha_D$ and $\gamma$.
			\\
			
			We did not aim to optimize the constraints and choices of the initial data and system parameters in the above example. In particular, choices $\mathcal{D}(\nu)\neq 0 \neq \mathcal{D}(\Omega_0)$ are also possible within admissible choices in for Theorem \ref{thm:det_pl}.
	
	\label{sec:stoch}
	\section{Second-order stochastic Winfree model with inertia} \label{sec:stoch}
In this section, we consider the stochastically perturbed second-order Winfree model \eqref{stoch-Winfree-eq}, in which all particles are affected by a time-dependent common noise, and its strength for particle $i$ is proportional to the deviation $\omega^i_t-\omega^c_t$ of its frequency from the instantaneous average frequency of the system. 

Let us explain the underlying probabilistic setting.
$B = (B_t)_{t \geq 0}$ is a standard real Brownian motion on a filtered probability space $(\Omega,\Fscr,(\Fscr_t)_{t \geq 0},P)$, where $(\Fscr_t)_{t \geq0}$ denotes the right-continuous and completed version of the Brownian filtration $\tilde{\Fscr}_t:= \sigma(B_s,0\leq s \leq t)$ (i.e. $(\mathcal{F}_t)_{t \geq 0}$ is the smallest filtration $\mathcal{F}_t \supseteq \tilde{\Fscr}_t$ such that all $P-$zero sets belong to $\mathcal{F}_0$ and $\mathcal{F}_t = \cap_{\varepsilon>0}\mathcal{F}_{t+\varepsilon}$). Moreover, $\sigma: \mathbb{R}\to \mathbb{R}$ is nonnegative and continuous. We do not assume $\sigma$ to be strictly positive or bounded away from $0$.
The system parameters $\gamma, \nu^i, \kappa$ are deterministic, while the initial data $(\Theta_0, \Omega_0)$ can be random. By the well-posedness theory for stochastic differential equations, it follows that \eqref{stoch-Winfree-eq} has a pathwise unique global solution on the filtered probability space fixed above (for example, writing $\theta^i_t = \int_0^t\omega^i_sds+\theta^i_0$, \eqref{stoch-Winfree-eq} can be considered a stochastic delay differential equation, which is well-posed in probabilistic strong sense).

Note that the system of equations for $(\theta^c,\omega^c)$ becomes
\begin{equation*}
	\begin{cases}
		\displaystyle d\theta^c_t = \omega^c_t dt,\\
		\displaystyle d\omega^c_t = \Big[-\gamma\omega^c_t + \nu^c - \kappa \mathcal{I}_c(\theta_t)\frac{1}{N}\sum_{i=1}^N\sin\theta^i_t\Big] dt,\\
		\displaystyle (\theta^c_t,\omega^c_t)\bigm|_{t=0}=(\theta^c_0,\omega^c_0),
	\end{cases}
\end{equation*}
i.e. the system of equations governing $(\theta^c,\omega^c)$ remains deterministic. In particular, $t\mapsto \theta^c_t$ is pathwise differentiable.
 The following auxiliary result is obtained analogously to Lemma \ref{L1}, since $\sum_{i=1}^N(\omega^c_t-\omega^i_t) = 0$.
\begin{lem} \label{L2.1stoch}
	Suppose the initial data and system parameters satisfy
	$$\omega^c_0 > 0 \quad \mbox{and} \quad \kappa < \frac{\nu^c}{2},$$
	and let $(\Theta_t, \Omega_t)$ be a global solution of \eqref{stoch-Winfree-eq}. Then, $\omega^c_t$ is uniformly bounded:
	\begin{align*}
		0<\min\bigg\{ \omega^c_0, ~\frac{\nu^c-2\kappa}{\gamma} \bigg\} \leq \omega^c_t \leq \max \bigg\{ \omega^c_0, ~\frac{\nu^c+2\kappa}{\gamma} \bigg\}, \quad t\geq0.
	\end{align*}
	In particular, $t\mapsto \theta^c_t$ is strictly increasing and unbounded.
\end{lem}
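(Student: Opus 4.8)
The plan is to observe that the proposed multiplicative noise acts only on the fluctuations $\omega^i_t-\omega^c_t$ of the frequencies around their mean, and hence vanishes identically under averaging over $i$. Consequently the pair $(\theta^c_t,\omega^c_t)$ obeys the deterministic system displayed just above the statement, and the estimate becomes a purely pathwise matter: the proof is then literally that of Lemma \ref{L1}.

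Concretely, I would first sum the $\omega^i$-equations of \eqref{stoch-Winfree-eq} over $i\in\{1,\dots,N\}$ and divide by $N$. The diffusion contribution is $\frac1N\sum_{i=1}^N\sigma_t(\omega^i_t-\omega^c_t)\,dB_t=\sigma_t\big(\omega^c_t-\omega^c_t\big)\,dB_t=0$, as a pathwise identity for each fixed $t$, because $\frac1N\sum_i\omega^i_t=\omega^c_t$; this is the content of $\sum_{i=1}^N(\omega^c_t-\omega^i_t)=0$ already noted in the text. Hence $t\mapsto\omega^c_t$ is in fact pathwise continuously differentiable and solves the deterministic equation $\frac{d}{dt}\omega^c_t=-\gamma\omega^c_t+\nu^c-\kappa\mathcal I_c(\Theta_t)\frac1N\sum_{i=1}^N\sin\theta^i_t$. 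I would then copy the argument of Lemma \ref{L1} verbatim: multiplying by the integrating factor gives $\frac{d}{dt}\big(e^{\gamma t}\omega^c_t\big)=e^{\gamma t}\big[\nu^c-\kappa\mathcal I_c(\Theta_t)\frac1N\sum_{i=1}^N\sin\theta^i_t\big]$, and since $0\le\mathcal I_c(\Theta_t)\le2$ and $\big|\frac1N\sum_{i=1}^N\sin\theta^i_t\big|\le1$, the bracket lies in $[\nu^c-2\kappa,\nu^c+2\kappa]$. Integrating from $0$ to $t$ and invoking $\nu^c-2\kappa>0$ (from $\kappa<\nu^c/2$) and $\omega^c_0>0$ yields the claimed two-sided bound with strictly positive lower bound. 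Finally, $\theta^c_t=\theta^c_0+\int_0^t\omega^c_s\,ds$, so the uniform positive lower bound on $\omega^c$ makes $t\mapsto\theta^c_t$ strictly increasing and forces $\theta^c_t\to\infty$.

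I do not expect a genuine obstacle here: the only point worth stating explicitly is that the stochastic integral driving $\omega^c$ vanishes \emph{pathwise}, not merely in expectation, which is exactly why the whole deterministic machinery — and in particular all constants and bounds of Lemma \ref{L1} — transfer without change, and why no assumption on $\sigma$ beyond continuity (indeed, measurability) is needed.
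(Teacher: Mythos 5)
Your proposal is correct and is exactly the paper's argument: the paper states that the lemma "is obtained analogously to Lemma \ref{L1}, since $\sum_{i=1}^N(\omega^c_t-\omega^i_t)=0$", which is precisely your observation that the averaged diffusion term vanishes pathwise so that $(\theta^c_t,\omega^c_t)$ satisfies the deterministic averaged system, after which the integrating-factor estimate of Lemma \ref{L1} applies verbatim.
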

We prove the emergence of phase-locking for particles $\theta^1,\dots,\theta^N$ governed by \eqref{stoch-Winfree-eq} under suitable assumptions on $\sigma$ and the system parameters $\gamma, \nu,\kappa$.
Suppose
\begin{equation}\label{sigma-assumption}
||\sigma||_2 <\infty \text{ and }\|\sigma\|_\infty \leq \sqrt{\frac{4\kappa}{\gamma}}.
\end{equation}
We need the following notation (compare with the corresponding constants from Section \ref{sec:det}). For $D,\delta >0$, set
\begin{align}\label{YDOmega-def}
	c_0 := c_0(\gamma, \kappa, \nu, \Omega_0,D,\delta) :=\frac{1}{\cosh\delta} \max\Bigg\{ \mathcal D(\Omega_0), ~\frac{\big(\mathcal D(\nu) +2\kappa D\big)\exp\Big( \frac{\|\sigma\|_2^2}{2} +\delta \Big)}{\gamma } \Bigg\} 
\end{align}
and
\begin{align}\label{alpha-def}
	&	\alpha_D := \frac{\gamma}{\nu^c} \bigg( \gamma c_0e^\delta \sinh\delta  +\frac{e^{\delta} \mathcal D(\nu)}{\cosh\delta} +3\kappa D^2 +2\kappa D \tanh\delta +\frac{9\kappa}{4\gamma} c_0 \bigg) \\\notag
	&\hspace{1.5cm} +\frac{2\gamma\kappa}{\nu^c(\nu^c-2\kappa)} \bigg( \gamma c_0e^\delta \sinh\delta  +\frac{e^{\delta} \mathcal D(\nu)}{\cosh\delta} +3\kappa D^2 +\frac{2\kappa D e^\delta}{\cosh\delta} +\frac{17\kappa}{4\gamma} c_0 \bigg).
\end{align}
Also, we use $\beta$, $L$ and $R$ as defined in  \eqref{A-9}-\eqref{def-R-det}.
Our main result is the following 
\begin{theorem}\label{thm-stoch-case}
	For $D>0$, let $(\Theta,\Omega)$ be a global solution of \eqref{A-7} with $\mathcal{D}(\Theta_0) < D$ and $0 < \nu^c-2\kappa \leq \gamma\omega^c_0 \leq \nu^c+2\kappa $. Further assume \eqref{sigma-assumption}, and let $\delta>0$ such that
	\begin{align}\label{eq-15}
		\frac{\omega_0^{ij}}{\cosh \delta}+\gamma \theta^{ij}_0 \leq2\pi L \alpha_D,
	\end{align}
	and
	\begin{align}\label{eq-14}
		~\frac{1}{\gamma} \big( 2\pi R \alpha_D+c_0e^\delta \sinh\delta  \big) < D,
	\end{align}
	where $c_0, \alpha_D$ are defined in \eqref{YDOmega-def}-\eqref{alpha-def}, and $\beta, L$ and $R$ are defined in \eqref{A-9}-\eqref{def-R-det}.
	Then, there is a measurable set $A_\delta$ with 
	\begin{align}\label{eq-13}
		P(A_\delta) \geq 1-2\exp\bigg( -\frac{\delta^2}{2\|\sigma\|_2^2} \bigg),
	\end{align}
	on which $\sup_{t \geq 0}\mathcal{D}(\Theta_t) < D$ holds pathwise.
\end{theorem}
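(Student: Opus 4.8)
The plan is to mirror the contradiction scheme of the deterministic Theorem~\ref{thm:det_pl}, but to run it on a random time interval and to control the noise-dependent terms on a suitable event of high probability. First I would introduce the stopping time $T := \inf\{t\geq 0 : \mathcal D(\Theta_t) \geq D\}$, which is strictly positive pathwise since $\mathcal D(\Theta_0)<D$ and $t\mapsto \Theta_t$ is continuous; the goal is to show $T=\infty$ on the event $A_\delta$. On $[0,T)$ we have the a priori bound $\sup_{t\in[0,T)}\mathcal D(\Theta_t)\leq D$, so the deterministic estimates of Section~\ref{sec:det} that only use this bound (the averaged bound of Lemma~\ref{L2.1stoch}, and the structure of $\mathcal I_c(\Theta_t)$ and $\sin\theta^i_t-\sin\theta^j_t$) remain available verbatim.

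The key new ingredient is a stochastic analogue of Lemma~\ref{L1.2}. Writing the SDE for $\omega^{ij}_t = \omega^i_t - \omega^j_t$, note that the noise coefficient becomes $\sigma_t(\omega^i_t-\omega^c_t) - \sigma_t(\omega^j_t-\omega^c_t) = \sigma_t\,\omega^{ij}_t$, so $\omega^{ij}$ solves a \emph{linear} SDE in its own noise with bounded-in-modulus drift forcing $\nu^{ij}-\kappa\mathcal I_c(\Theta_t)(\sin\theta^i_t-\sin\theta^j_t)$, which on $[0,T)$ is bounded by $\mathcal D(\nu)+2\kappa D$. Applying the integrating factor $\exp\big(\gamma t - \int_0^t\sigma_s\,dB_s + \tfrac12\int_0^t\sigma_s^2\,ds\big)$ (Itô), I would obtain a variation-of-constants representation for $\omega^{ij}_t$ in which the stochastic exponential factors appear, and then estimate $\omega^{ij}_t$ by $e^{-\gamma t}\mathcal D(\Omega_0)$ plus a forcing term multiplied by $\sup_{0\le s\le t}\exp\big(\pm(M_t-M_s)\big)$ where $M_t := \int_0^t\sigma_s\,dB_s$. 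Since $M$ is a continuous martingale with $\langle M\rangle_\infty \leq \|\sigma\|_2^2<\infty$, the maximal inequality / time-change for martingales gives $P\big(\sup_{t\geq 0}|M_t|\geq \delta\big)\leq 2\exp\big(-\delta^2/(2\|\sigma\|_2^2)\big)$; define $A_\delta$ to be the complementary event. On $A_\delta$ every factor $\exp(\pm(M_t-M_s))$ is bounded by $e^{2\delta}$ (or one absorbs the $+\tfrac12\int\sigma^2$ into the $e^{\|\sigma\|_2^2/2}$ appearing in $c_0$), yielding the bound $\sup_{t\in[0,T)}\mathcal D(\Omega_t)\leq 2c_0\cosh\delta$, i.e. exactly the quantity hard-coded into the definition \eqref{YDOmega-def} of $c_0$; the constraint $\|\sigma\|_\infty^2\leq 4\kappa/\gamma$ is what keeps an Itô correction term of size $\tfrac14\|\sigma\|_\infty^2 c_0 \le \tfrac{\kappa}{\gamma}c_0$ under control inside $\alpha_D$.

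With $\mathcal D(\Omega)$ controlled on $A_\delta\cap[0,T)$, I would redo the computation of Lemma~\ref{L3} for $R^{ij}_t := \omega^{ij}_t+\gamma\theta^{ij}_t$: its Itô differential has the \emph{same} martingale part $\sigma_t R^{ij}_t\,dB_t$ minus $\sigma_t\gamma\theta^{ij}_t\,dB_t$-type corrections — actually, since $d(\gamma\theta^{ij}_t)=\gamma\omega^{ij}_t\,dt$ has no noise, the noise in $dR^{ij}_t$ is precisely $\sigma_t\omega^{ij}_t\,dB_t = \sigma_t(R^{ij}_t-\gamma\theta^{ij}_t)\,dB_t$, and after passing to the clock $\mu(t)=\theta^c_t$ (which is pathwise $C^1$ and strictly increasing by Lemma~\ref{L2.1stoch}) one gets a random ODE-type inequality $\frac{d\tilde R^{ij}}{dr}\leq \alpha_D - \beta(r)\tilde R^{ij}(r)$ on $A_\delta$, where all the noise-generated contributions ($\gamma c_0 e^\delta\sinh\delta$, the $\tanh\delta$ and $\cosh\delta$ factors, the $\tfrac{9\kappa}{4\gamma}c_0$ and $\tfrac{17\kappa}{4\gamma}c_0$ Itô-correction terms) have been collected into the enlarged constant $\alpha_D$ of \eqref{alpha-def}. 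Then Lemma~\ref{L4} (the Osipov–Kabir–Tanaka comparison with $2\pi$-periodic $\beta$) applies with initial datum $\tilde R^{ij}(\mu(0)) = R^{ij}_0 = \omega^{ij}_0+\gamma\theta^{ij}_0$, which by \eqref{eq-15} is $\leq 2\pi L\alpha_D$ — here the $\cosh\delta$ in \eqref{eq-15} accounts for the bound on $\omega^{ij}_0$ via $c_0$ — so we get $R^{ij}_t\leq 2\pi R\alpha_D$ on $[0,T)\cap A_\delta$. Finally, integrating $\frac{d}{dt}\theta^{ij}_t=\omega^{ij}_t = R^{ij}_t-\gamma\theta^{ij}_t$ pathwise gives $\theta^{ij}_t\leq e^{-\gamma t}\theta^{ij}_0 + (1-e^{-\gamma t})\frac{1}{\gamma}(2\pi R\alpha_D+c_0 e^\delta\sinh\delta)$ — the extra $c_0 e^\delta\sinh\delta$ absorbing the discrepancy between $\omega^{ij}_t$ and $R^{ij}_t-\gamma\theta^{ij}_t$ coming from the noise-driven part of $\omega^{ij}$ — which by \eqref{eq-14} is $<D$; symmetrically for $-\theta^{ij}_t$. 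Hence on $A_\delta$, $\mathcal D(\Theta_T)<D$ by continuity, contradicting $T<\infty$, so $\sup_{t\geq0}\mathcal D(\Theta_t)<D$ on $A_\delta$, and $P(A_\delta)\geq 1-2\exp(-\delta^2/(2\|\sigma\|_2^2))$ by the martingale maximal inequality.

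The main obstacle I anticipate is the bookkeeping in the stochastic version of Lemma~\ref{L3}: one must carefully track the Itô correction terms produced when differentiating products and when passing through the integrating factor, show that each is dominated by a quantity of the form $(\text{const})\cdot\|\sigma\|_\infty^2 c_0$ or $(\text{const})\cdot\sinh\delta\cdot(\text{bounded})$, and verify that the precise numerical constants collected into \eqref{alpha-def} indeed majorize them — all while only ever invoking the a priori bound $\mathcal D(\Theta_t)\leq D$ valid on $[0,T)$ and never on the full time axis. A secondary subtlety is making sure the comparison Lemma~\ref{L4} is applied pathwise on the random interval $(\mu(0),\mu(T))$ rather than on all of $\mathbb R$; since $\mu(T)$ can be finite, one uses that the differential inequality holds on that interval and compares with the (globally defined) periodic supersolution from Lemma~\ref{L4}, exactly as in the deterministic proof.
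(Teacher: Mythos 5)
Your overall architecture matches the paper's: the stopping time $T=\inf\{t:\mathcal D(\Theta_t)>D\}$, the event $A_\delta=\{\sup_{t\ge0}|\int_0^t\sigma_s\,dB_s|<\delta\}$ with the exponential (Bernstein-type) martingale inequality giving \eqref{eq-13}, the stochastic-exponential integrating factor $\exp\big(\gamma t-\int_0^t\sigma_s\,dB_s+\tfrac12\int_0^t\sigma_s^2\,ds\big)$ to bound $\mathcal D(\Omega_t)$ by (a constant comparable to) $c_0e^\delta\cosh\delta$ on $A_\delta\cap\{t<T\}$, the clock change $\mu(t)=\theta^c_t$, the comparison with Lemma \ref{L4}, and the final integration and contradiction. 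This part is sound.

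However, there is a genuine gap at the central step. You propose to derive the pathwise differential inequality $\frac{d\tilde R^{ij}}{dr}\le\alpha_D-\beta(r)\tilde R^{ij}(r)$ for $R^{ij}_t=\omega^{ij}_t+\gamma\theta^{ij}_t$, claiming that the martingale part $\sigma_t\omega^{ij}_t\,dB_t$ of $dR^{ij}_t$ can be ``collected into the enlarged constant $\alpha_D$'' after passing to the clock $\mu$. This fails: $\mu$ is of bounded variation, so the time change does not remove the martingale part, and $R^{ij}$ is not pathwise absolutely continuous, so no pathwise ODE inequality holds and Lemma \ref{L4} cannot be applied to $\tilde R^{ij}$. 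Moreover, the event $A_\delta$ only controls $\sup_t|\int_0^t\sigma_s\,dB_s|$, not $\sup_t|\int_0^t\sigma_s\omega^{ij}_s\,dB_s|$, so this stochastic integral cannot be dominated on $A_\delta$ by a deterministic drift term. The correct device --- and the one the paper uses --- is to run the comparison on $Q^{ij}_t:=Y_t\omega^{ij}_t+\gamma\theta^{ij}_t$ with $Y_t=\frac{1}{\cosh\delta}\exp\big(-\int_0^t\sigma_s\,dB_s\big)$: by Itô's product rule the martingale parts of $Y_t\,d\omega^{ij}_t$ and $\omega^{ij}_t\,dY_t$ cancel exactly, leaving only the Itô correction $-\tfrac{\sigma_t^2}{2}Y_t\omega^{ij}_t\,dt$, so $Q^{ij}$ is pathwise $C^1$ and the constants you list in $\alpha_D$ (the $\gamma c_0e^\delta\sinh\delta$, $\tanh\delta$, $\tfrac{9\kappa}{4\gamma}c_0$, $\tfrac{17\kappa}{4\gamma}c_0$ terms) genuinely arise from $(1-Y_t)$-factors and this correction. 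One then applies Lemma \ref{L4} to $\tilde Q^{ij}$, with initial datum $\frac{\omega^{ij}_0}{\cosh\delta}+\gamma\theta^{ij}_0$ (explaining the $\cosh\delta$ in \eqref{eq-15}), and only at the end converts back via $\omega^{ij}_t+\gamma\theta^{ij}_t=Q^{ij}_t+(1-Y_t)\omega^{ij}_t\le 2\pi R\alpha_D+c_0e^\delta\sinh\delta$ --- which is precisely where the extra term in \eqref{eq-14} originates. Your closing remark about a ``discrepancy between $\omega^{ij}_t$ and $R^{ij}_t-\gamma\theta^{ij}_t$'' is vacuous as written (these are equal by definition) and signals that the comparison must be applied to $Q^{ij}$, not $R^{ij}$; with that substitution the rest of your argument goes through.
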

\begin{rem}
	It is clear that all assumptions of Theorem \ref{thm-stoch-case} remain valid, if $||\sigma||_2$ becomes smaller while $\delta>0$ is fixed. Hence, for any $\varepsilon>0$, one can choose $||\sigma||_2$ sufficiently small in order to obtain $P(A_\delta) >1-\varepsilon$.
\end{rem}
For the proof, we shall use the process $Y_t = Y_0\exp\bigg( -\int_0^t \sigma_s dB_s \bigg)$, $Y_0 >0$, which is a martingale (with respect to the natural Brownian filtration). It turns out helpful to choose the (deterministic) initial condition $Y_0 = \frac{1}{\cosh \delta}$, where $\delta >0$ is as in the assertion of Theorem \ref{thm-stoch-case}.
\begin{lem}
	The process $Y$ solves the stochastic differential equation
	\begin{equation}\label{SDE-Y}
		dY_t = \frac{\sigma_t^2}{2} Y_t dt -\sigma_t Y_t dB_t,\quad t \geq 0
	\end{equation}
	(in strong probabilistic sense, i.e. on the specified underlying probability space $(\Omega,\Fscr, (\mathcal{F}_t)_{t \geq 0}, P)$). Moreover, for $\delta>0$, we have
	\begin{align*}
		P\bigg\{ \sup_{t\geq0} \bigg| \int_0^t \sigma_s dB_s \bigg| < \delta \bigg\} \geq 1-2\exp\bigg( -\frac{\delta^2}{2\|\sigma\|_2^2} \bigg).
	\end{align*}
	Consequently, for $A_\delta := \bigg\{ \sup_{t\geq0} \bigg| \int_0^t \sigma_s dB_s \bigg| < \delta\bigg\}$ we have
	$$P(A_\delta) \nearrow 1\text{ as }\delta \nearrow \infty.$$
	Moreover, setting $Y_0 := \frac{1}{\cosh \delta}$, on $A_\delta$ we have the estimates
	\begin{align}\label{eq-Y-1}
		\frac{e^{-\delta}}{\cosh\delta} = e^{-\delta} Y_0 \leq Y_t \leq e^\delta Y_0 = \frac{e^{\delta}}{\cosh\delta},\quad t \geq 0
	\end{align}
	and
	\begin{align}\label{eq-0}
		|Y_t-1| < \tanh\delta,\quad t \geq 0.
	\end{align}
\end{lem}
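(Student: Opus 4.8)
The plan is to derive all four assertions from the single observation that, under \eqref{sigma-assumption}, $M_t := -\int_0^t \sigma_s\,dB_s$ is a continuous martingale bounded in $L^2$, with quadratic variation $\langle M\rangle_t = \int_0^t \sigma_s^2\,ds \leq \|\sigma\|_2^2$ for all $t\geq0$, and with $Y_t = Y_0 e^{M_t}$. For the SDE \eqref{SDE-Y} I would simply apply It\^o's formula to the function $x \mapsto Y_0 e^x$ along the continuous semimartingale $M$: since $dM_t = -\sigma_t\,dB_t$ and $d\langle M\rangle_t = \sigma_t^2\,dt$,
\begin{align*}
	dY_t = Y_0 e^{M_t}\,dM_t + \tfrac12 Y_0 e^{M_t}\,d\langle M\rangle_t = \frac{\sigma_t^2}{2} Y_t\,dt - \sigma_t Y_t\,dB_t,
\end{align*}
which is exactly \eqref{SDE-Y}. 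Since $\sigma$ is continuous, hence locally bounded, all integrals are well defined on the fixed space $(\Omega,\Fscr,(\Fscr_t)_{t\geq0},P)$, so this holds in the strong probabilistic sense.

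For the maximal estimate I would fix $\lambda>0$ and consider the stochastic exponential $Z^\lambda_t := \exp\!\big(\lambda M_t - \tfrac{\lambda^2}{2}\langle M\rangle_t\big)$, a nonnegative supermartingale (in fact a martingale, by Novikov's criterion and $\langle M\rangle_\infty \leq \|\sigma\|_2^2<\infty$) with $Z^\lambda_0=1$. On the event $\{\sup_{t\geq0} M_t \geq \delta\}$ one has, using $\langle M\rangle_t \leq \|\sigma\|_2^2$ and $\lambda>0$, that $\sup_{t\geq0} Z^\lambda_t \geq \exp\!\big(\lambda\delta - \tfrac{\lambda^2}{2}\|\sigma\|_2^2\big)$, so by the maximal inequality for nonnegative supermartingales (Ville's inequality) $P\big(\sup_{t\geq0} M_t \geq \delta\big) \leq \exp\!\big(-\lambda\delta + \tfrac{\lambda^2}{2}\|\sigma\|_2^2\big)$. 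Choosing $\lambda = \delta/\|\sigma\|_2^2$ gives the bound $\exp(-\delta^2/(2\|\sigma\|_2^2))$; applying the same argument to $-M$ and taking a union bound yields
\begin{align*}
	P\bigg\{ \sup_{t\geq0} \bigg| \int_0^t \sigma_s\,dB_s \bigg| \geq \delta \bigg\} = P\Big( \sup_{t\geq0} |M_t| \geq \delta \Big) \leq 2\exp\!\bigg( -\frac{\delta^2}{2\|\sigma\|_2^2} \bigg),
\end{align*}
which is the asserted inequality. Equivalently, one could invoke the Dambis--Dubins--Schwarz time change $M_t = W_{\langle M\rangle_t}$, the bound $\langle M\rangle_t \leq \|\sigma\|_2^2$, the reflection principle, and the Gaussian tail estimate $P(N(0,1)\geq x) \leq \tfrac12 e^{-x^2/2}$.

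The remaining claims are then immediate. Since $\|\sigma\|_2<\infty$, the last display gives $P(A_\delta) \geq 1 - 2\exp(-\delta^2/(2\|\sigma\|_2^2)) \to 1$ as $\delta\to\infty$, and as $\delta\mapsto A_\delta$ is increasing, $P(A_\delta)\nearrow1$. On $A_\delta$ one has $|M_t| = \big|\int_0^t \sigma_s\,dB_s\big| < \delta$ for every $t\geq0$, hence $e^{-\delta}Y_0 \leq Y_t = Y_0 e^{M_t} \leq e^\delta Y_0$; with $Y_0 = 1/\cosh\delta$ this is \eqref{eq-Y-1}. Finally, from $\cosh\delta \mp \sinh\delta = e^{\mp\delta}$ one obtains $1 \mp \tanh\delta = e^{\mp\delta}/\cosh\delta$, so \eqref{eq-Y-1} reads $1-\tanh\delta \leq Y_t \leq 1+\tanh\delta$, with strict inequalities on $A_\delta$, i.e. \eqref{eq-0}. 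There is no real obstacle in this lemma; the only point deserving a little care is that the maximal inequality must be applied over the infinite horizon $[0,\infty)$, which is precisely where the integrability assumption $\|\sigma\|_2<\infty$ (equivalently $\langle M\rangle_\infty<\infty$) is used.
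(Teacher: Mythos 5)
Your proof is correct and follows essentially the same route as the paper: It\^o's formula for \eqref{SDE-Y}, a Bernstein-type maximal inequality for the stochastic integral, and elementary hyperbolic identities ($e^{\mp\delta}/\cosh\delta = 1\mp\tanh\delta$) for \eqref{eq-Y-1} and \eqref{eq-0}. The only difference is that the paper simply cites the maximal inequality (Lemma B.1.3 of Berglund--Gentz), whereas you prove it from scratch via the exponential supermartingale and Ville's inequality with the optimized exponent $\lambda=\delta/\|\sigma\|_2^2$; both are fine.
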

\begin{proof}
	It follows from Itô's formula that $Y$ solves \eqref{SDE-Y}. The estimate for $P(A_\delta)$ follows from \cite[Lemma B.1.3]{BG06-book}. The final estimates for $Y_t$ on $A_\delta$ are verified by elementary calculations.
\end{proof}
We proceed to the proof of Theorem \ref{thm-stoch-case}.
\begin{proof}[Proof of Theorem \ref{thm-stoch-case}]
	
	Note that \eqref{stoch-Winfree-eq} gives
	\begin{align*}
		d\omega^{ij}_t  = \Big[ -\gamma\omega^{ij}_t +\nu^{ij} -\kappa\mathcal I_c(\Theta_t) \big( \sin\theta^i_t -\sin\theta^j_t \big) \Big] dt +\sigma_t \omega^{ij}_t dB_t, \quad t \geq 0.
	\end{align*}
	Hence, Itô's product rule implies
	\begin{align}\label{eq-3}
		d \big( Y_t\omega^{ij}_t \big) = Y_t\Big[ -\gamma\omega^{ij}_t +\nu^{ij} -\kappa\mathcal I_c(\Theta_t) \big( \sin\theta^i_t -\sin\theta^j_t \big) \Big] dt -\frac{\sigma_t^2}{2} Y_t \omega^{ij}_t dt,\quad t \geq 0.
	\end{align}
	In particular, $t \mapsto Y_t\omega^{ij}_t$ is pathwise differentiable.
	For $D>0$, we denote by $T$ the map
	\begin{align*}
		T := \inf \{ t>0: \mathcal D(\Theta_t) > D\},
	\end{align*}
	and note that due to the continuity of $t \mapsto \theta^i_t$, $\mathcal{D}(\Theta_0) < D$ implies $T>0$ and $\mathcal{D}(\Theta_T) =D$, if $T <\infty$. In order to prove Theorem \ref{thm-stoch-case}, we show
	\begin{equation}\label{stopp-time-infinity}
		T=\infty \text{ on }A_\delta,
	\end{equation}
	which gives $\sup_{t \geq 0}\mathcal{D}(\Theta_t) \leq D$ on $A_\delta$ and hence the assertion.
	
	First note the following estimate on $A_\delta$ for $0<t <T$, which follows from $|\mathcal I_c(\Theta_t)|\leq 2$ and $|\sin x -\sin y|\leq |x-y|$:
	\begin{align*}
		& \frac{d}{dt} \bigg[ Y_t\omega^{ij}_t \exp\bigg( \gamma t +\int_0^t \frac{\sigma_s^2}{2} ds \bigg) \bigg] \\
		&\hspace{.2cm} = Y_t\Big[ \nu^{ij} -\kappa\mathcal I_c(\Theta_t) \big( \sin\theta^i_t -\sin\theta^j_t \big) \Big] \exp\bigg( \gamma t +\int_0^t \frac{\sigma_s^2}{2} ds \bigg) \\
		&\hspace{.2cm} \leq \frac{\mathcal D(\nu) +2\kappa D}{\cosh\delta} \exp\bigg( \gamma t +\frac{\|\sigma\|_2^2}{2} +\delta \bigg),
	\end{align*}
	and thus we also have
	\begin{align*}
		Y_t\omega^{ij}_t e^{\gamma t} \leq Y_t\omega^{ij}_t \exp\bigg( \gamma t +\int_0^t \frac{\sigma_s^2}{2} ds \bigg) \leq \frac{\omega^{ij}_0}{\cosh\delta} +\frac{\mathcal D(\nu) +2\kappa D}{\cosh\delta} \frac{e^{\frac{\|\sigma\|_2^2}{2} +\delta}}{\gamma} (e^{\gamma t}-1),
	\end{align*}
	from which we infer
	\begin{align}
		Y_t\omega^{ij}_t &\leq \frac{\mathcal D(\Omega_0)}{\cosh\delta} e^{-\gamma t} +\frac{\mathcal D(\nu) +2\kappa D}{\cosh\delta} \frac{e^{\frac{\|\sigma\|_2^2}{2} +\delta}}{\gamma} (1-e^{-\gamma t}) \notag\\
		&\leq \frac{1}{\cosh\delta} \max\Bigg\{ \mathcal D(\Omega_0), ~\frac{\mathcal D(\nu) +2\kappa D}{\gamma \exp\Big( -\frac{\|\sigma\|_2^2}{2} -\delta \Big)} \Bigg\} =c_0.\label{eq-4}
	\end{align}

	Combining with \eqref{eq-Y-1}, we have
	\begin{align}\label{eq-00}
		\omega^{ij}_t \leq c_0e^\delta \cosh\delta 
	\end{align}
	on $A_\delta$ and for $t <T$.  From \eqref{eq-0},\eqref{eq-3}, \eqref{eq-4} and \eqref{eq-00}, we infer on $A_\delta$ for $t <T$ 
	\begin{align}\label{eq-8}
		\frac{d}{dt} \big( Y_t\omega^{ij}_t +\gamma\theta^{ij}_t \big) &= \gamma (1-Y_t) \omega^{ij}_t +Y_t \nu^{ij} -\frac{\kappa}{\gamma} (1+\cos\theta^c_t)\cos\theta^c_t \big( Y_t\omega^{ij}_t +\gamma\theta^{ij}_t \big) \notag\\
		&\hspace{.2cm} +\kappa(1+\cos\theta^c_t)\cos\theta^c_t \theta^{ij}_t -\kappa \mathcal I_c(\Theta_t) \big( \sin\theta^i_t -\sin\theta^j_t \big) \notag\\\
		&\hspace{.2cm} -\kappa (Y_t-1)\mathcal I_c(\Theta_t) \big( \sin\theta^i_t -\sin\theta^j_t \big) +\bigg( \frac{\kappa}{\gamma} (1+\cos\theta^c_t)\cos\theta^c_t -\frac{\sigma_t^2}{2} \bigg) Y_t \omega^{ij}_t \\\
		&\leq \gamma c_0e^\delta \sinh\delta +\frac{e^{\delta} \mathcal D(\nu)}{\cosh\delta} -\frac{\kappa}{\gamma} (1+\cos\theta^c_t)\cos\theta^c_t \big( Y_t\omega^{ij}_t +\gamma\theta^{ij}_t \big) \notag\\\
		&\hspace{.2cm} +3\kappa D^2 +2\kappa D \tanh\delta +\frac{9\kappa}{4\gamma} c_0\notag,
	\end{align}
	and consequently
	\begin{align}\label{eq-10}
		\bigg| \frac{d}{dt} \big( Y_t\omega^{ij}_t +\gamma\theta^{ij}_t \big) \bigg| \leq \gamma c_0e^\delta \sinh\delta  +\frac{e^{\delta} \mathcal D(\nu)}{\cosh\delta} +3\kappa D^2 +\frac{2\kappa D e^\delta}{\cosh\delta} +\frac{17\kappa}{4\gamma}c_0.
	\end{align}
	For abbreviation, we set 
	$$Q^{ij}_t := Y_t{\omega^{ij}_t}+\gamma \theta^{ij}_t, \,\, \mu(t):= \theta^c_t,\,\,\tilde{Q}^{ij}(r) := Q^{ij}\circ \mu^{-1}(r),$$
	i.e. in particular $\tilde{Q}^{ij}(\theta^c_t) = Q^{ij}_t$. Since $\frac{d\mu}{dt} (t) = \frac{d\theta^c}{dt}(t) = \omega^c_t $,  by Lemma \ref{L2.1stoch} and \eqref{eq-10} we obtain 
	\begin{align*}
		\frac{d\tilde Q^{ij}}{dr}(\mu(t)) = \frac{dQ^{ij}_t}{dt}\big(\frac{d\mu}{dt}(t)\big)^{-1} &\leq \frac{\gamma}{\nu^c-2\kappa} \bigg( \gamma c_0e^\delta \sinh\delta  +\frac{e^{\delta} \mathcal D(\nu)}{\cosh\delta} +3\kappa D^2 +\frac{2\kappa D e^\delta}{\cosh\delta} +\frac{17\kappa}{4\gamma} c_0 \bigg).
	\end{align*}
	Since this estimate holds for all $1\leq i,j \leq N$, combined with \eqref{eq-8} and Lemma \ref{L2.1stoch}, it implies
	\begin{align*}
		\frac{\nu^c}{\gamma} \frac{d\tilde Q^{ij}}{dr} (\mu(t))&= \frac{dQ^{ij}_t}{dt} +\frac{d\tilde Q^{ij}}{dr}(\mu(t)) \bigg( \frac{\nu^c}{\gamma} -\frac{d\mu}{dt} (t)\bigg) \\
		&\leq -\frac{\kappa}{\gamma}(1+\cos\theta^c_t)\cos\theta^c_t \big( Y_t \omega^{ij}_t +\gamma\theta^{ij}_t \big) \\
		&\hspace{.2cm} +\gamma c_0e^\delta \sinh\delta  +\frac{e^{\delta} \mathcal D(\nu)}{\cosh\delta} +3\kappa D^2 +2\kappa D \tanh\delta +\frac{9\kappa}{4\gamma} c_0 \\
		&\hspace{.2cm} +\frac{2\kappa}{\nu^c-2\kappa} \bigg( \gamma c_0e^\delta \sinh\delta  +\frac{e^{\delta} \mathcal D(\nu)}{\cosh\delta} +3\kappa D^2 +\frac{2\kappa D e^\delta}{\cosh\delta} +\frac{17\kappa}{4\gamma} c_0 \bigg),
	\end{align*}
	and therefore we obtain
	\begin{equation*}
		\frac{d\tilde Q^{ij}}{dr}(r) \leq \alpha_D -\beta(r) \tilde Q^{ij}(r), \quad r \in (\theta^c_0,\theta^c_T).
	\end{equation*}
	Comparing with Lemma \ref{L4} and since 
	$$\tilde{Q}^{ij}(\theta^c_0)  = Q^{ij}(0)= \frac{\omega_0^{ij}}{\cosh \delta}+\gamma \theta^{ij}_0 \leq2\pi L \alpha_D, $$ we have $\tilde{Q}^{ij}(r)\leq 2\pi R \alpha_D$ for all $r \in (\theta^c_0,\theta^c_T)$, so for $t \in (0,T)$, we have $Q^{ij}_t = \tilde{Q}^{ij}(\theta^c_t)\leq 2\pi R \alpha_D$.  From the definition of $Q^{ij}$, together with \eqref{eq-0} and \eqref{eq-00}, this implies the following: Under the assumptions of the assertions, on $A_\delta$ we have 
	\begin{align*}
		\omega^{ij}_t +\gamma\theta^{ij}_t = Q^{ij}_t +(1-Y_t)\omega^{ij}_t \leq 2\pi R\alpha_D+c_0e^\delta \sinh\delta ,\quad t \in (0,T).
	\end{align*}
	To conclude the proof (i.e. in order to prove \eqref{stopp-time-infinity}), we claim for each $1\leq i,j \leq N$ 
	\begin{align}\label{eq-11}
		\theta^{ij}_t \leq \max\bigg\{ \mathcal D(\Theta_0), ~\frac{1}{\gamma} \big( 2\pi R \alpha (D) +c_0e^\delta \sinh\delta  \big) \bigg\} < D,\quad t \in (0,T).
	\end{align}
	on $A_\delta$. For any path $t \mapsto \theta^{ij}_t$ with \eqref{eq-11}, assuming $T<\infty$ leads to a contradiction, since in this case $\theta^{ij}_t \longrightarrow D$ would hold as $t \to T$.
	Finally, \eqref{eq-11} can be obtained as follows: Since $\frac{d}{dt}(e^{\gamma t}\theta^{ij}_t) = e^{\gamma t}(\omega^{ij}_t +\gamma\theta^{ij}_t )$, we have
	\begin{align*}
		\theta^{ij}_t \leq \theta^{ij}_0 e^{-\gamma t} +\frac{1}{\gamma} (2\pi R\alpha_D+c_0e^\delta \sinh\delta  )(1-e^{-\gamma t}) \leq \max\bigg\{ \mathcal{D}(\Theta_0), ~\frac{1}{\gamma} \big( 2\pi R\alpha_D+c_0e^\delta \sinh\delta \big) \bigg\}.
	\end{align*}
	Since the final strict inequality in \eqref{eq-11} holds by assumption, the proof is complete.
\end{proof}
\begin{rem}
	The theorem remains valid in the case where $(\Theta_0,\Omega_0)$ is random such that for this initial data \eqref{stoch-Winfree-eq} has a unique probabilistic strong solution, if one replaces $A_\delta$ in the assertion by $\mathcal{A}_\delta:= A_\delta \cap \{\nu^c+2\kappa \geq \gamma\omega^c_0 \geq \nu^c-2\kappa\}\cap \{\mathcal{D}(\Theta_0)< D\}$. However, if the latter two sets are not of full $P$-measure, then the lower bound \eqref{eq-13} does not necessarily hold with $\mathcal{A}_\delta$ in place of $A_\delta$.
\end{rem}

We conclude this section with an example of system parameters and initial data which satisfy all assumptions of Theorem \ref{thm-stoch-case}. Suppose 
\begin{align}\label{eq00}
	\mathcal D(\nu) = 0, \quad D\in(0, 0.05),\quad\nu^c > 0.
\end{align}
 It follows from 
$
	\lim_{\kappa\to0+} \kappa R = \frac{\nu^c}{\pi}
$
that we can choose $\kappa\in(0, \nu^c/2)$ sufficiently small  such that 
\begin{align} \label{cond1}
	\kappa R \leq \frac{2\nu^c}{\pi}, \quad \frac{8\kappa(3D^2 +4D)}{\nu^c-2\kappa} < \frac{D-20D^2}{2}.
\end{align}
Let $\gamma>0$ be sufficiently large so that
\begin{align} \label{cond2}
	\frac{16\kappa D}{\gamma} +\frac{36\kappa D}{\gamma^2} +\frac{32\kappa^2D}{\gamma(\nu^c-2\kappa)} +\frac{136\kappa^2D}{\gamma^2(\nu^c-2\kappa)} +\frac{4\kappa^2D}{\gamma^3} < \frac{D-20D^2}{2},
\end{align}
and choose $\delta > 0$ sufficiently small such that
\begin{align} \label{cond3}
	\gamma e^\delta \sinh\delta \leq \kappa, \quad \tanh\delta \leq D, \quad \frac{e^\delta}{\cosh\delta} < \sqrt{2}.
\end{align}
If $\mathcal D(\Omega_0)$ is sufficiently small so that $\gamma \mathcal D(\Omega_0) \leq 2\kappa D$, then, independently from the choice of $\sigma$, we have
\begin{align*}
c_0= \frac{2\kappa D\exp\Big( \frac{\|\sigma\|_2^2}{2} +\delta \Big)}{\gamma \cosh\delta } \leq \frac{2\sqrt{2}\kappa D\exp\Big( \frac{\|\sigma\|_2^2}{2} \Big)}{\gamma }.
\end{align*}
Hence, also  choosing $\sigma$ such that $e^{\|\sigma\|_2^2/2} \leq \sqrt{2}$, we have
\begin{align}\label{eq-100}
	c_0 \leq 4\kappa D/\gamma.
\end{align}
Combining \eqref{cond1}-\eqref{eq-100}, we obtain
\begin{align*}
 &\frac{2\pi R \alpha_D +e^\delta \sinh\delta c_0}{\gamma} 
	\leq \frac{4\nu^c \alpha_D}{\gamma\kappa} +\frac{4\kappa^2D}{\gamma^3}\\
	& \leq \frac{4}{\kappa} \bigg( \kappa c_0 +5\kappa D^2 +\frac{9\kappa}{4\gamma} c_0 \bigg) +\frac{8}{\nu^c-2\kappa} \bigg( \kappa c_0 +3\kappa D^2 +4\kappa D +\frac{17\kappa}{4\gamma} c_0 \bigg) +\frac{4\kappa^2D}{\gamma^3}\\
	& \leq 20D^2 +\frac{16\kappa D}{\gamma} +\frac{36\kappa D}{\gamma^2} +\frac{32\kappa^2D}{\gamma(\nu^c-2\kappa)} +\frac{136\kappa^2D}{\gamma^2(\nu^c-2\kappa)} +\frac{4\kappa^2D}{\gamma^3} +\frac{8\kappa(3D^2 +4D)}{\nu^c-2\kappa} \\
	&<20D^2 +D -20D^2 = D.
\end{align*}
Hence, for the choices made in  \eqref{eq00}, to obtain \eqref{eq-14} one can choose $\kappa = \kappa(\nu^c)$ sufficiently small, $\gamma = \gamma(\kappa,D)$ sufficiently large, $\delta=\delta(\gamma, D)$ and $\mathcal{D}(\Omega_0) = \mathcal{D}(\Omega_0)(\gamma, \kappa, D)$ sufficiently small and, finally, $||\sigma||_2$ smaller than an absolute constant. It is obvious that these choices can be made such that also \eqref{eq-15} and $0 < \nu^c-2\kappa \leq \gamma\omega^c_0 \leq \nu^c+2\kappa $ hold.

We point out that we did not aim to optimize the constraints on the system parameters and the initial data in this example. In particular, it is not necessary to have $\mathcal{D}(\nu) =0$.

	\section{Numerical simulations}\label{sect:numerics}
Here we provide several numerical examples in order to confirm our results from Sections \ref{sec:det} and \ref{sec:stoch} and to motivate possible future works. In all simulations, we set the number of oscillators $N = 21$, the time step size $\Delta t = 0.01$, and we used the Euler method and the Euler-Maruyama method for the deterministic and stochastic case, respectively.
	
	\subsection{Deterministic case}
	We observe an example of Theorem \ref{thm:det_pl} and present further motivating examples. In the first simulation, we choose natural frequencies $\nu_i$, coupling strength $\kappa$, friction coefficient $\gamma$, and initial frequency $\Omega_0$ as follows:
	\begin{align*}
		& \nu_i = 128+10^{-4}(i-11) \implies \nu_c = 128, \quad \mathcal D(\nu) = 2\times 10^{-3}, \\
		& \kappa = 0.2, \quad \gamma = 4, \quad \omega^c_0 = \frac{\nu^c}{\gamma}, \quad \mathcal D(\Omega_0) = 0 \implies 0 < \nu^c-2\kappa \leq \gamma\omega^c_0 \leq \nu^c+2\kappa.
	\end{align*}
	For the initial condition, we choose
	\begin{align*}
		\theta^i_0 = 4\times 10^{-3}(i-11) \implies \mathcal D(\Theta_0) = 0.08,
	\end{align*}
	so the assumptions of Theorem \ref{thm:det_pl} hold with $D = 0.1$:
	\begin{align*}
		2\pi R \alpha_D -\gamma D \approx -0.0117 < 0, \quad \mathcal D(\Omega_0 +\gamma\Theta_0) -2\pi L \alpha_D \approx -0.0659 \leq 0.
	\end{align*}
	
	\begin{figure}[h!]
		\centering
		\mbox{ \hspace{-1cm}
			\subfigure[~Graph of $\mathcal D(\Theta(t))$ for $0\leq t\leq 5$]{\includegraphics[scale = 0.14]{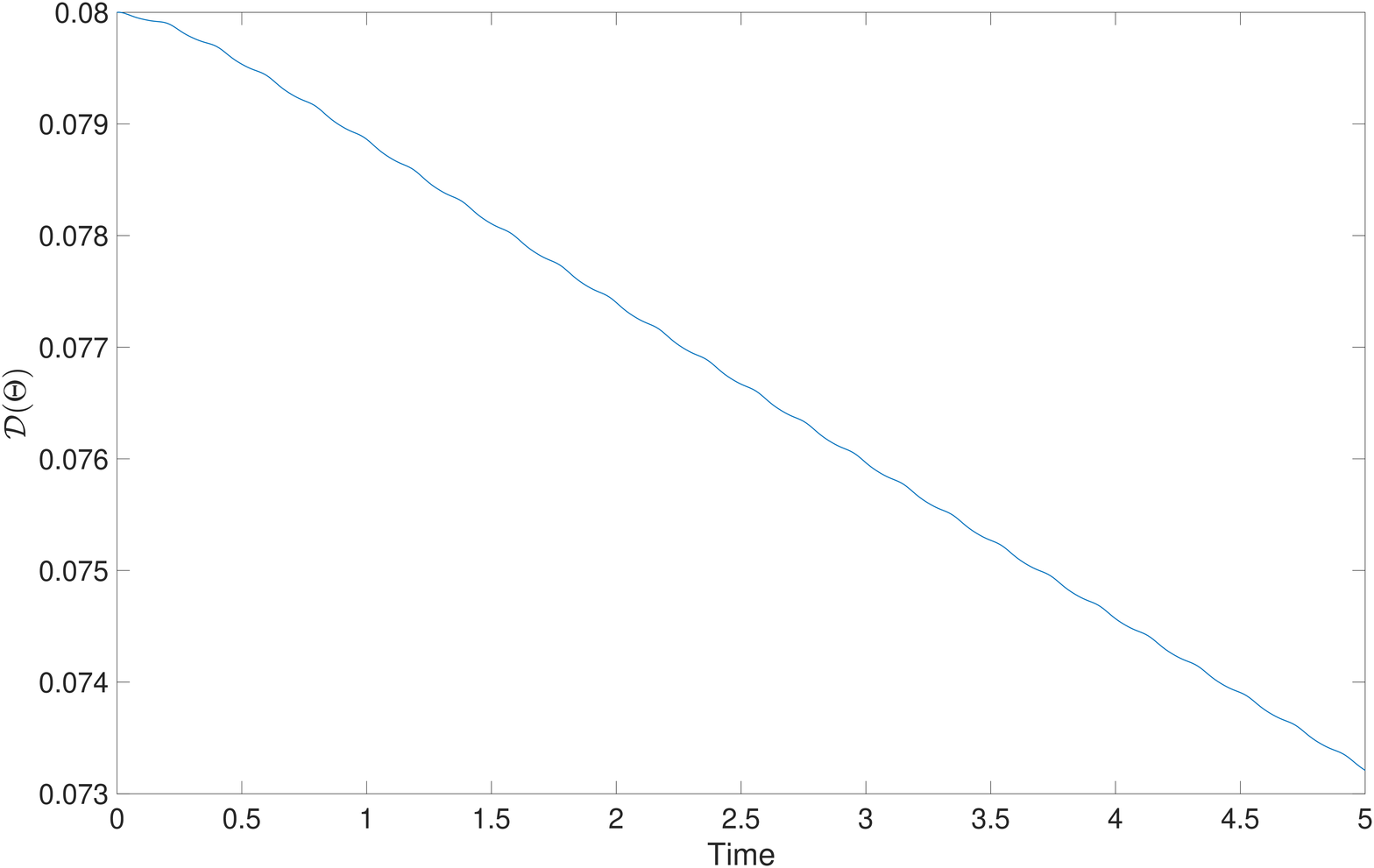}}
			\subfigure[~Graph of $\theta_i(t)/t$ for $0.1\leq t\leq 5$]{\includegraphics[scale = 0.14]{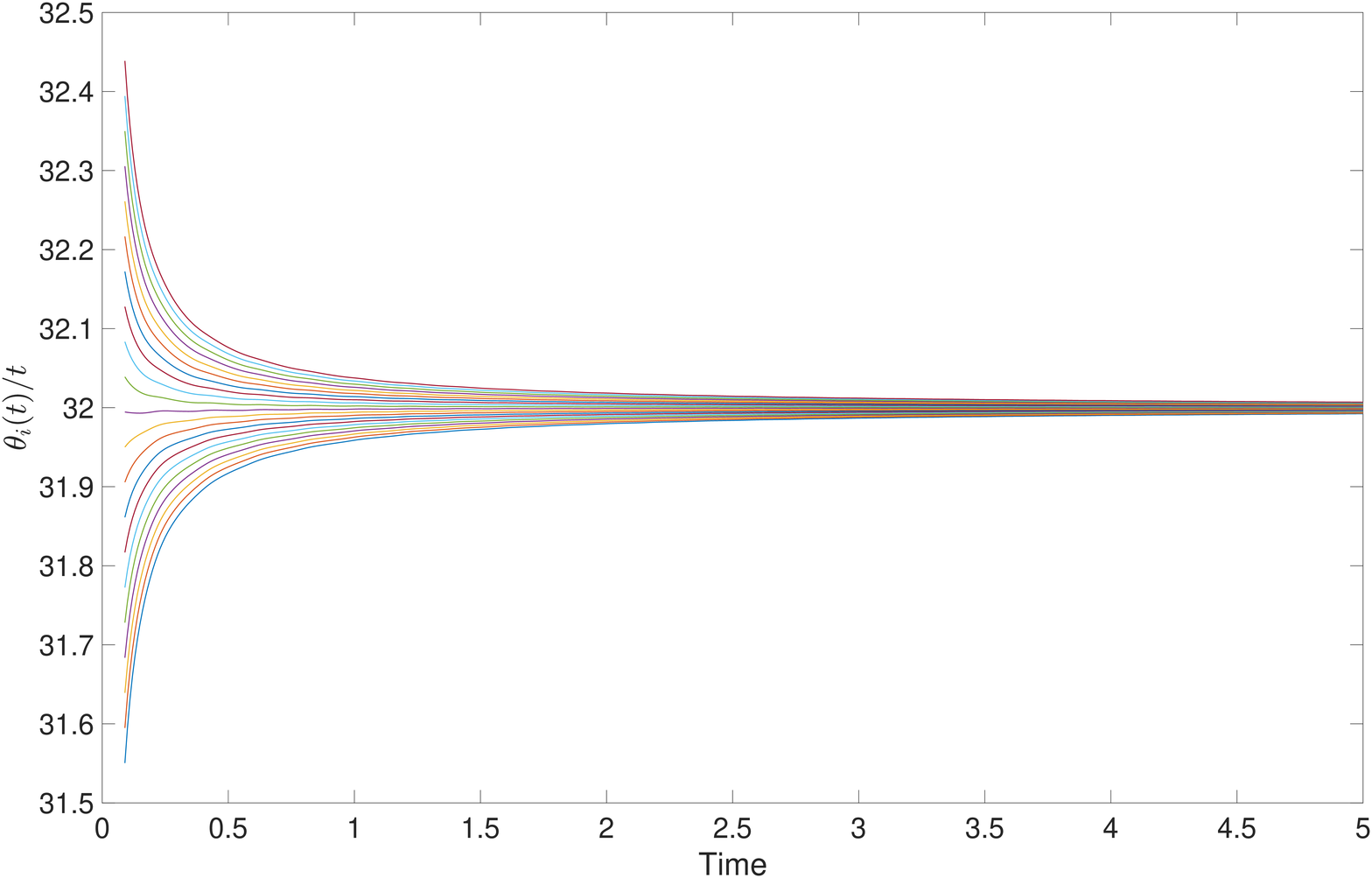}}
		}
		\caption{Emergence of phase-locked state}
		\label{F1}
	\end{figure}
	
		\begin{figure}[h!]
		\centering
		\mbox{ \hspace{-1cm}
			\subfigure[~Graph of $\mathcal D(\Theta(t))$ for $0\leq t\leq 5$ and $\kappa=1$]{\includegraphics[scale = 0.14]{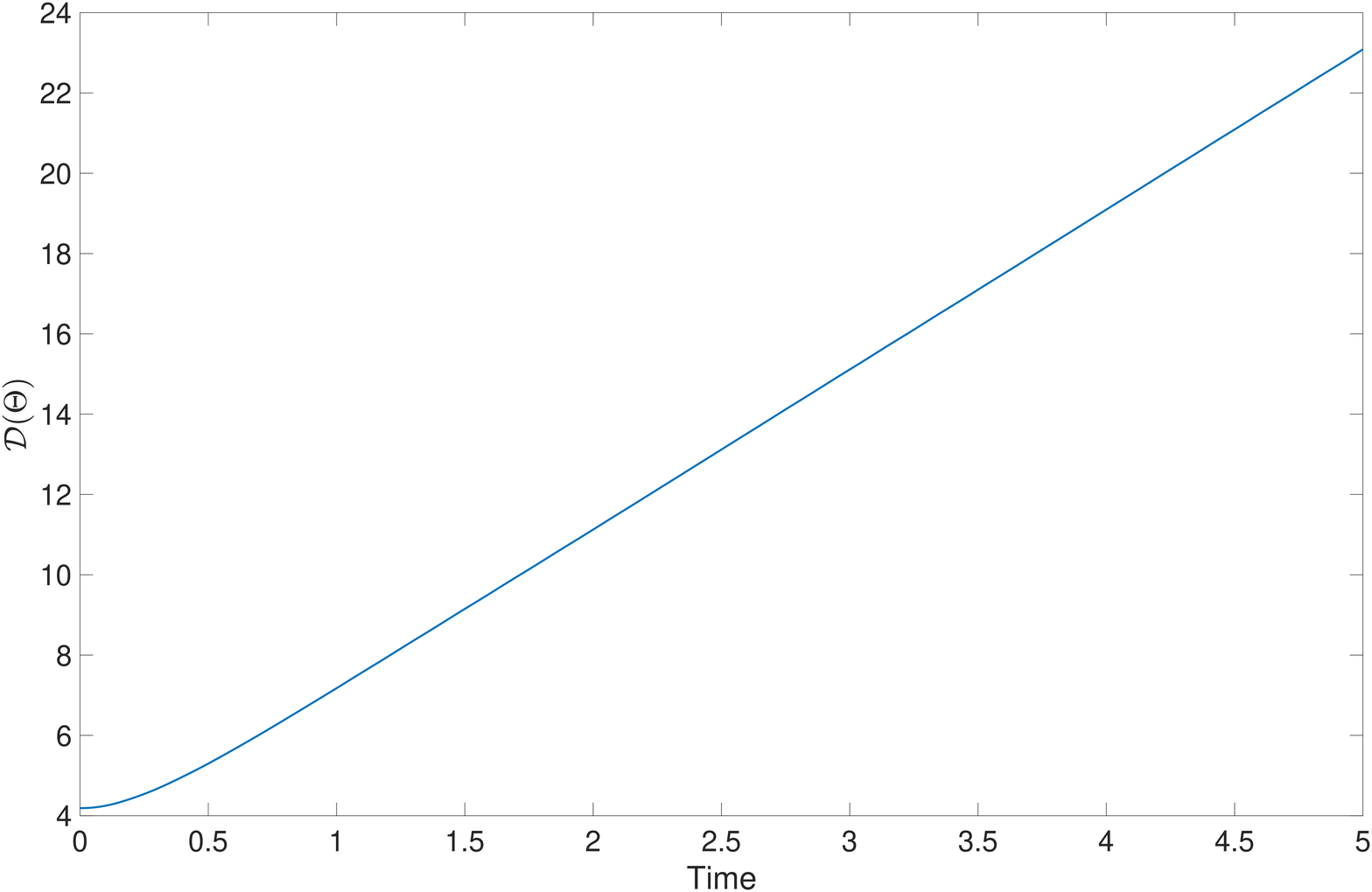}}
			\subfigure[~Graph of $\theta_i(t)/t$ for $0.1\leq t\leq 5$ and $\kappa=1$]{\includegraphics[scale = 0.14]{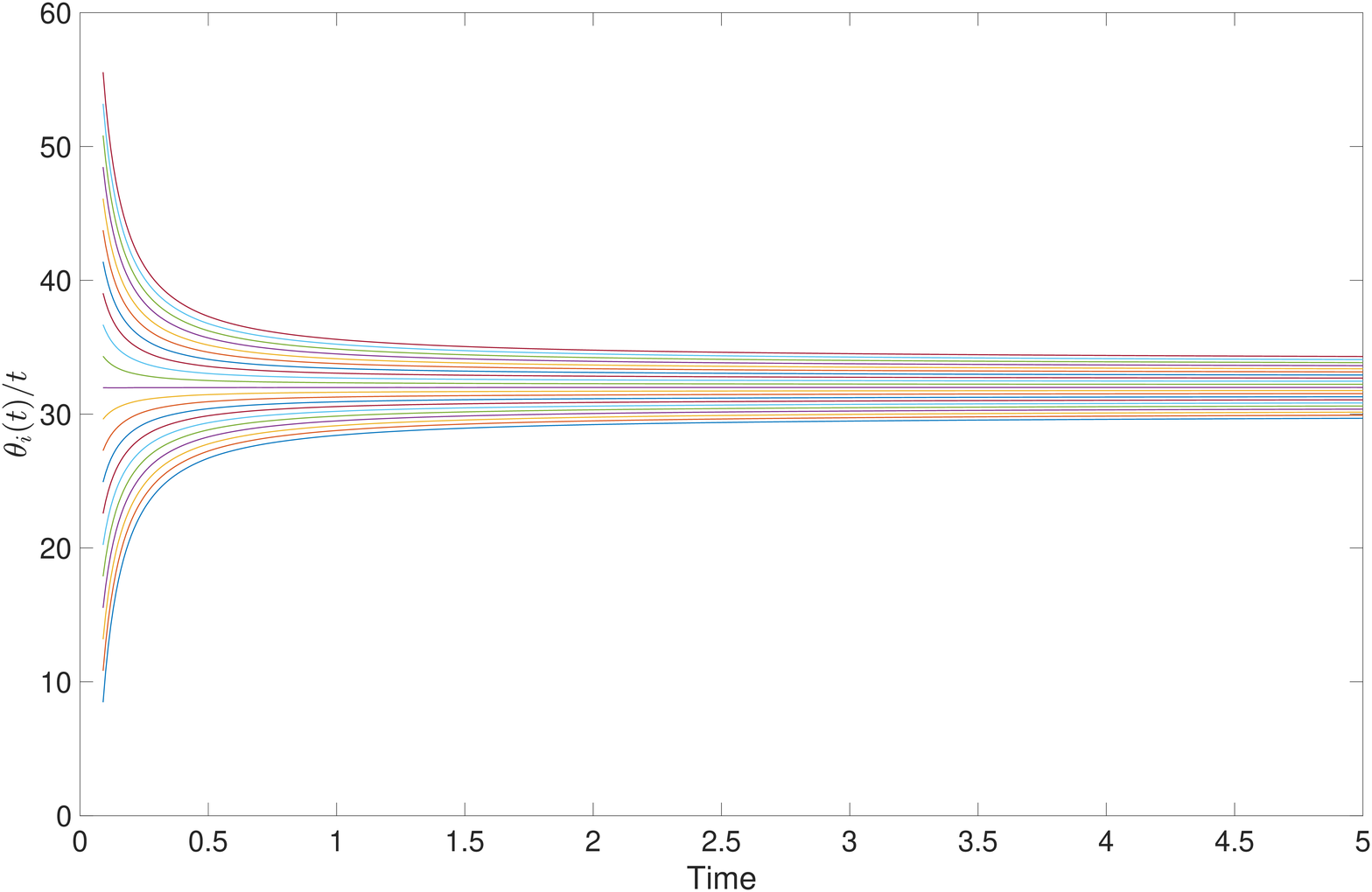}}
		} \\
		\mbox{ \hspace{-1cm}
			\subfigure[~Graph of $\mathcal D(\Theta(t))$ for $0\leq t\leq 5$ and $\kappa=50$]{\includegraphics[scale = 0.14]{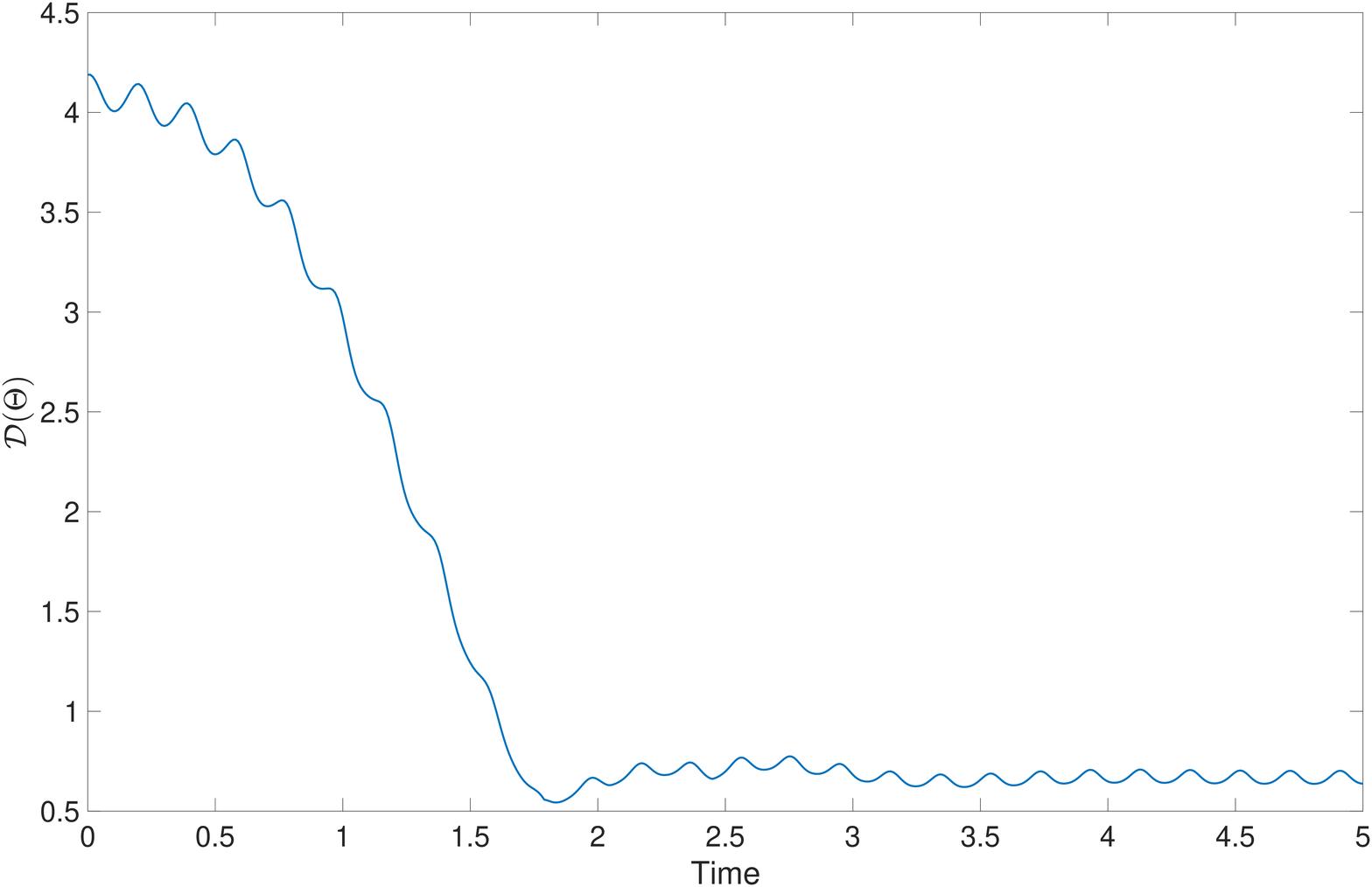}}
			\subfigure[~Graph of $\theta_i(t)/t$ for $0.1\leq t\leq 5$ and $\kappa=50$]{\includegraphics[scale = 0.14]{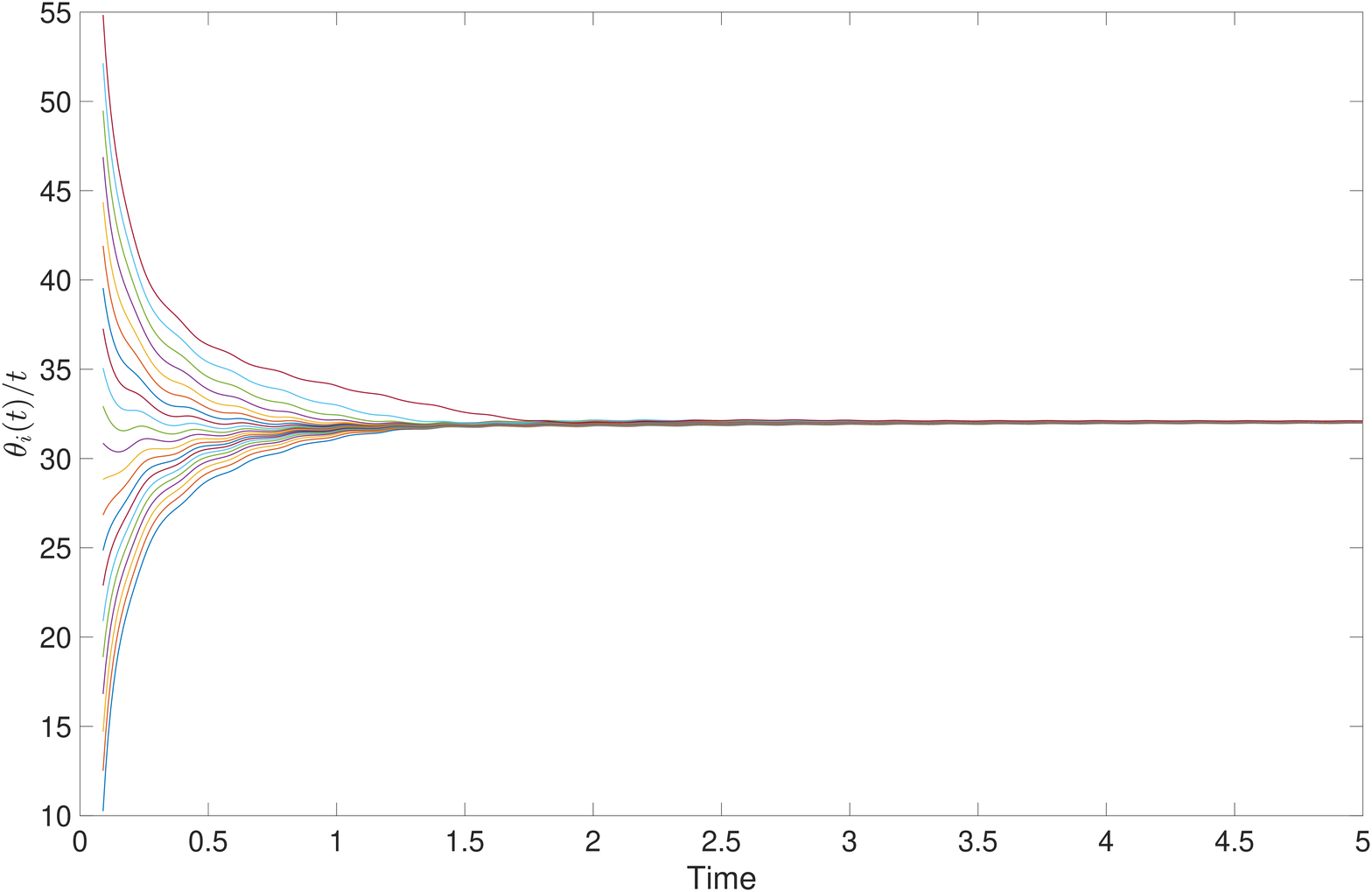}}
		}
		\caption{Effect of coupling strength}
		\label{F2}
	\end{figure}
	
	 In Figure 1, it is seen that the phase diameter is bounded by $\mathcal D(\Theta_0)$ and that the rotation number of each oscillator seems to be well-defined and to coincide with one another, which is in accordance with the assertion of Theorem \ref{thm:det_pl}.
	Next, we change the natural frequencies $\nu_i$ and initial phase $\Theta_0$ to
	\begin{align*}
		\nu_i = 128+8(i-11), \quad \theta^i_0 = \frac{2\pi}{3}(i-11),
	\end{align*}
	and observe the corresponding dynamics for two drastically different coupling strengths, namely $\kappa = 1$ and $\kappa= 50$. In these cases, not all conditions of \eqref{A-5} hold. For $\kappa=1$, synchronous behavior does not seem to emerge, however, for large coupling strength ($\kappa = 50$), the simulations in Figure 2 hint at a phase-locking result in this case as well. More precisely, Figure 2(b) shows the emergence of distinct rotation numbers for $\kappa =1$, which explains the divergence of the phase diameter in Figure 2 (a) and the absence of phase-locking in this case. However, for $\kappa =50$, Figures 2 (c) and (d) hint at the emergence of phase-locking. We infer that phase-locking can occur in suitable large coupling regimes as well.

\subsection{Stochastic case}
We proceed with simulations for the stochastic case, i.e. for the model introduced in \eqref{stoch-Winfree-eq}. First, we choose natural frequencies $\nu_i$, coupling strength $\kappa$, friction coefficient $\gamma$, and initial frequency $\Omega_0$ as
\begin{align*}
& \nu^c = 12, \quad \mathcal D(\nu) = 0, \quad \kappa = 0.1, \quad \gamma = 5, \quad \omega^c_0 = \frac{\nu^c}{\gamma}, \quad \mathcal D(\Omega_0) = 0 \\
& \implies 0 < \nu^c-2\kappa \leq \gamma\omega^c_0 \leq \nu^c+2\kappa,
\end{align*}
and set $D$, $\mathcal{D}(\Theta_0),\delta$ and $\sigma_t$ as
\begin{align*}
& \hspace{-1cm} D = 0.1, \quad \mathcal{D}(\Theta_0) = 0.08,\quad\delta = \frac{\sqrt{\log9}}{50}, \quad \sigma_t = \frac{1}{50(1+t)}, \quad \theta^i_0 = 4\times 10^{-3}(i-11).
\end{align*}Then 
\begin{align*}
&\|\sigma\|_2 = \|\sigma\|_\infty = \frac{1}{50} < \sqrt{\frac{4\kappa}{\gamma}}, \quad \frac{\mathcal D(\Omega_0)}{\cosh \delta}+\gamma \mathcal D(\Theta_0) -2\pi L \alpha_D \approx -0.0744 \leq 0, \\
& 2\pi R \alpha_D+e^\delta \sinh\delta \|Y\mathcal D(\Omega)\|_\infty -\gamma D \approx -0.0094 < 0.
\end{align*}

For this parameter configuration, we observe 5000 sample paths in the time interval $[0, 50]$. 	

\begin{figure}[h!]
\centering
\mbox{ \hspace{-1cm}
\subfigure[~Sample paths of $\mathcal D(\Theta(t))$ for $0\leq t\leq 50$]{\includegraphics[scale = 0.14]{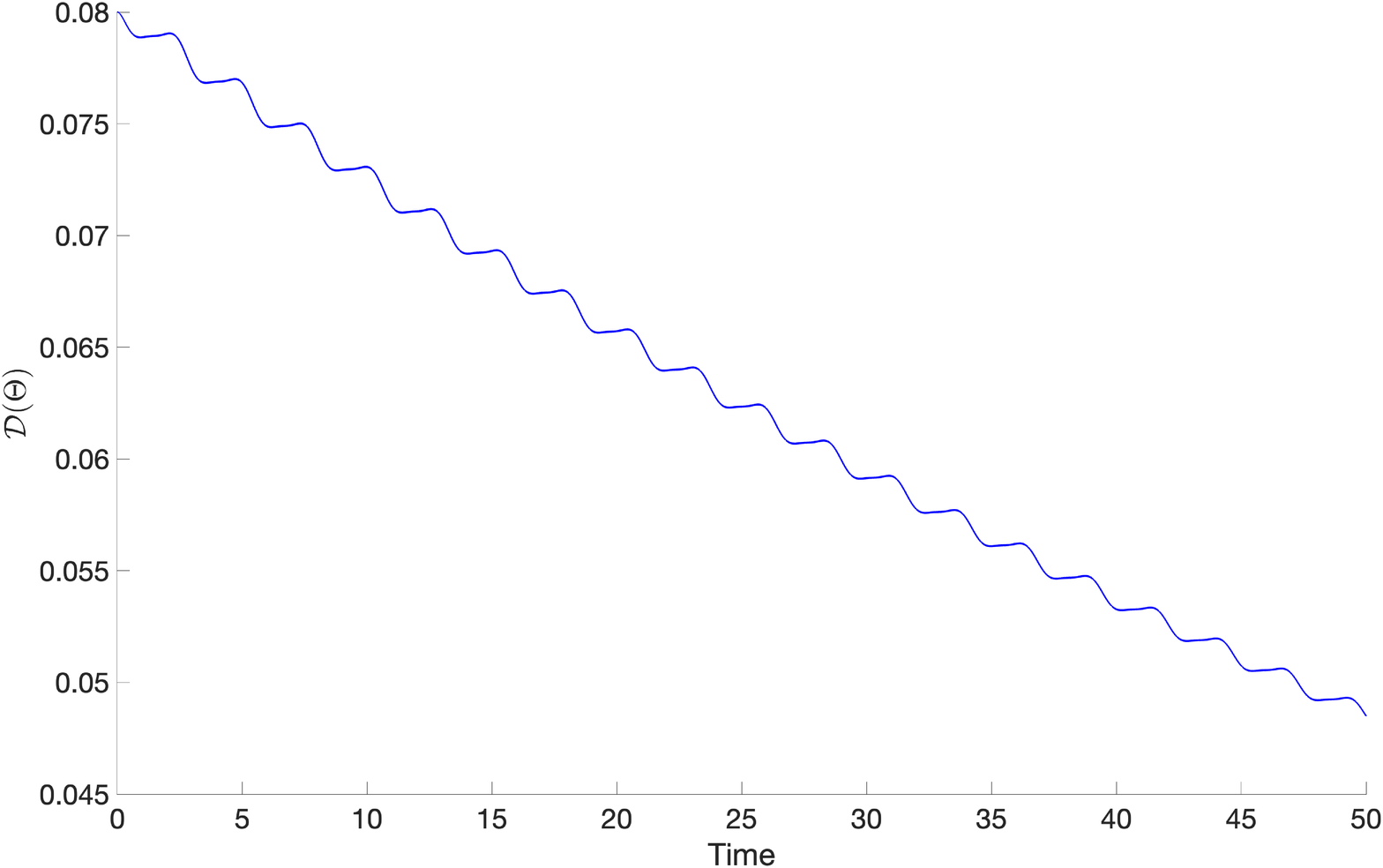}}
\subfigure[~Sample path of $\mathcal D(\Theta(t))$ for $0\leq t\leq 2.5$]{\includegraphics[scale = 0.14]{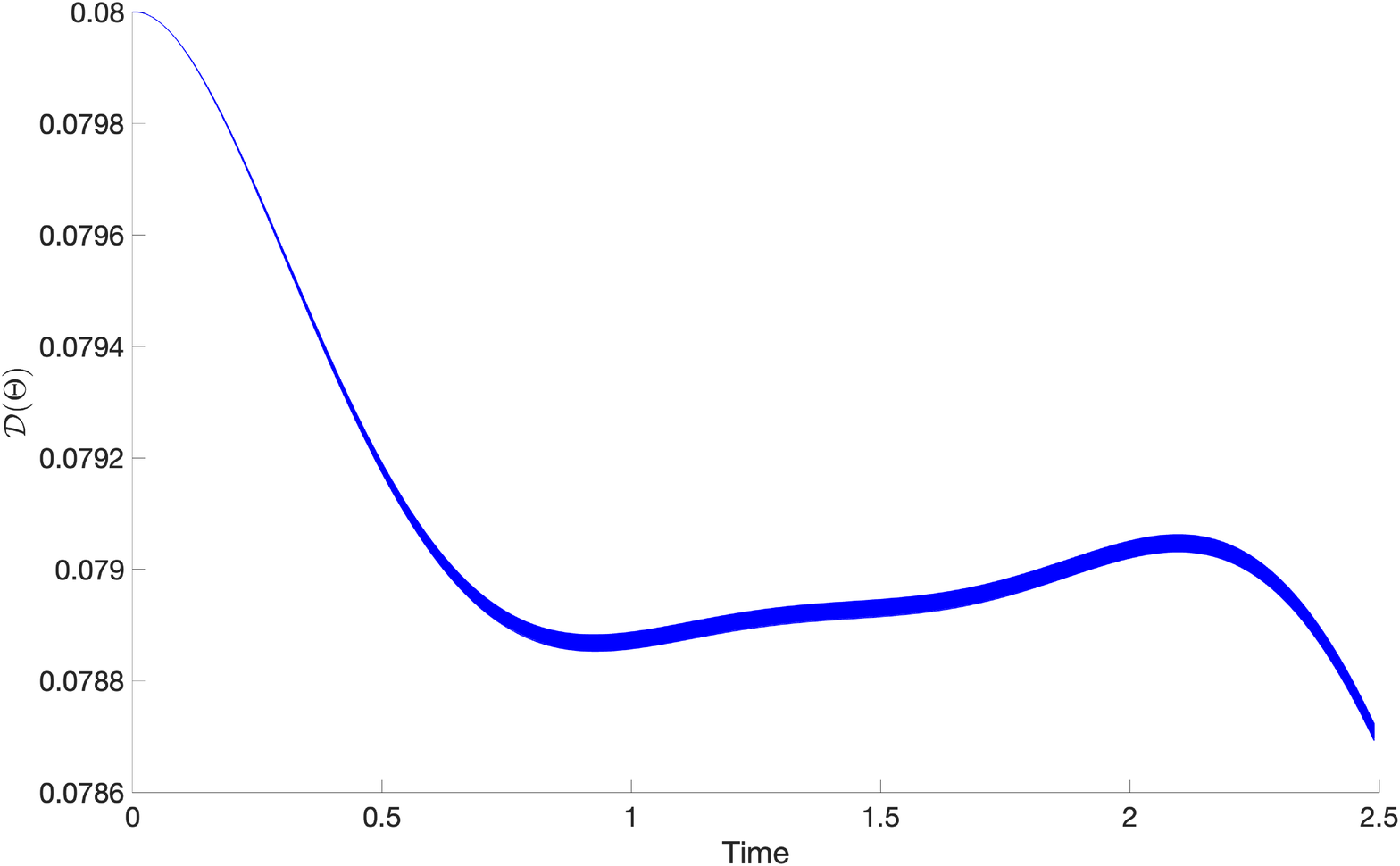}}
}
\caption{Emergence of phase-locked state}
\end{figure}
The corresponding sample paths are plotted in Figure 3, on small and large time scale. One observes that all paths seem to be uniformly (in $t$) bounded by $\mathcal D(\Theta_0)$, which follows the result of Theorem \ref{thm-stoch-case}. Note that the lower bound of the probability for uniformly bounded $\mathcal D(\Theta)$ given in Theorem \ref{thm-stoch-case} is
\begin{align*}
P(A_\delta) \geq 1-2\exp\bigg( -\frac{\delta^2}{2\|\sigma\|_2^2} \bigg) = \frac13.
\end{align*}
Figure Figure 3 suggests that this bound is not optimal. We leave it as a future work.

Next, we change natural frequencies $\nu_i$, initial phase $\Theta_0$, and $\sigma_t$ to
\begin{align*}
\nu_i = 12+\frac{i-11}{10}, \quad \theta^i_0 = \frac{2\pi}{3}(i-11), \quad \sigma_t = \frac{1}{2(1+t)},
\end{align*}
so that $\mathcal D(\nu) = 2$, $\mathcal D(\Theta_0) = 4\pi/3$, and $\|\sigma\|_\infty = \|\sigma\|_2 = 0.5$. We observe 5000 sample paths for coupling strengths $\kappa = 1$ and $\kappa=5$, respectively, in order to separately study the effect of coupling strength on the emergence of phase locking.

\begin{figure}[h!]
\centering
\mbox{ \hspace{-1cm}
\subfigure[~Sample paths of $\mathcal D(\Theta(t))$ for $0\leq t\leq 50$ and $\kappa=1$]{\includegraphics[scale = 0.14]{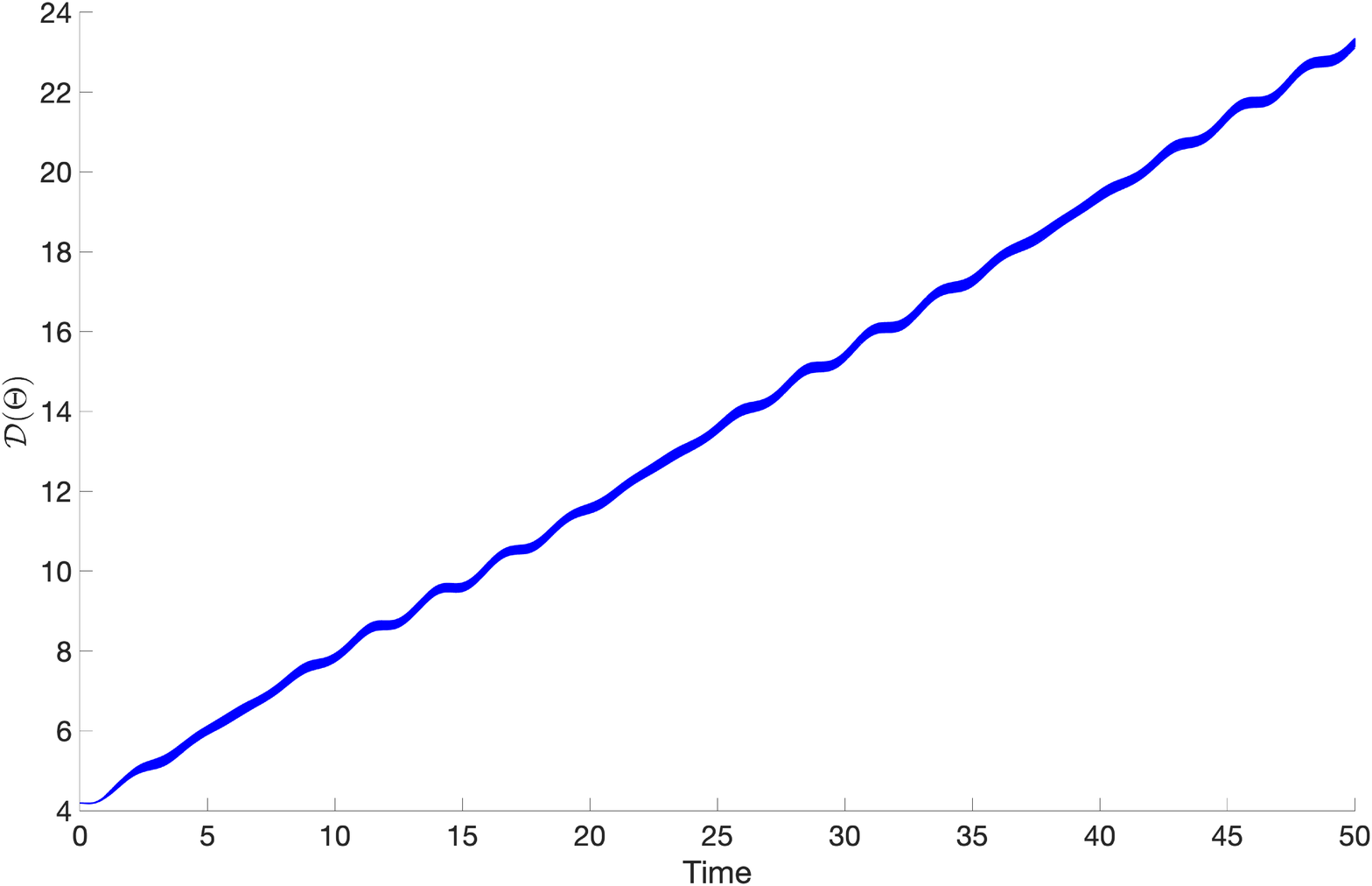}}
\subfigure[~Sample path of $\mathcal D(\Theta(t))$ for $0\leq t\leq 50$ and $\kappa=5$]{\includegraphics[scale = 0.14]{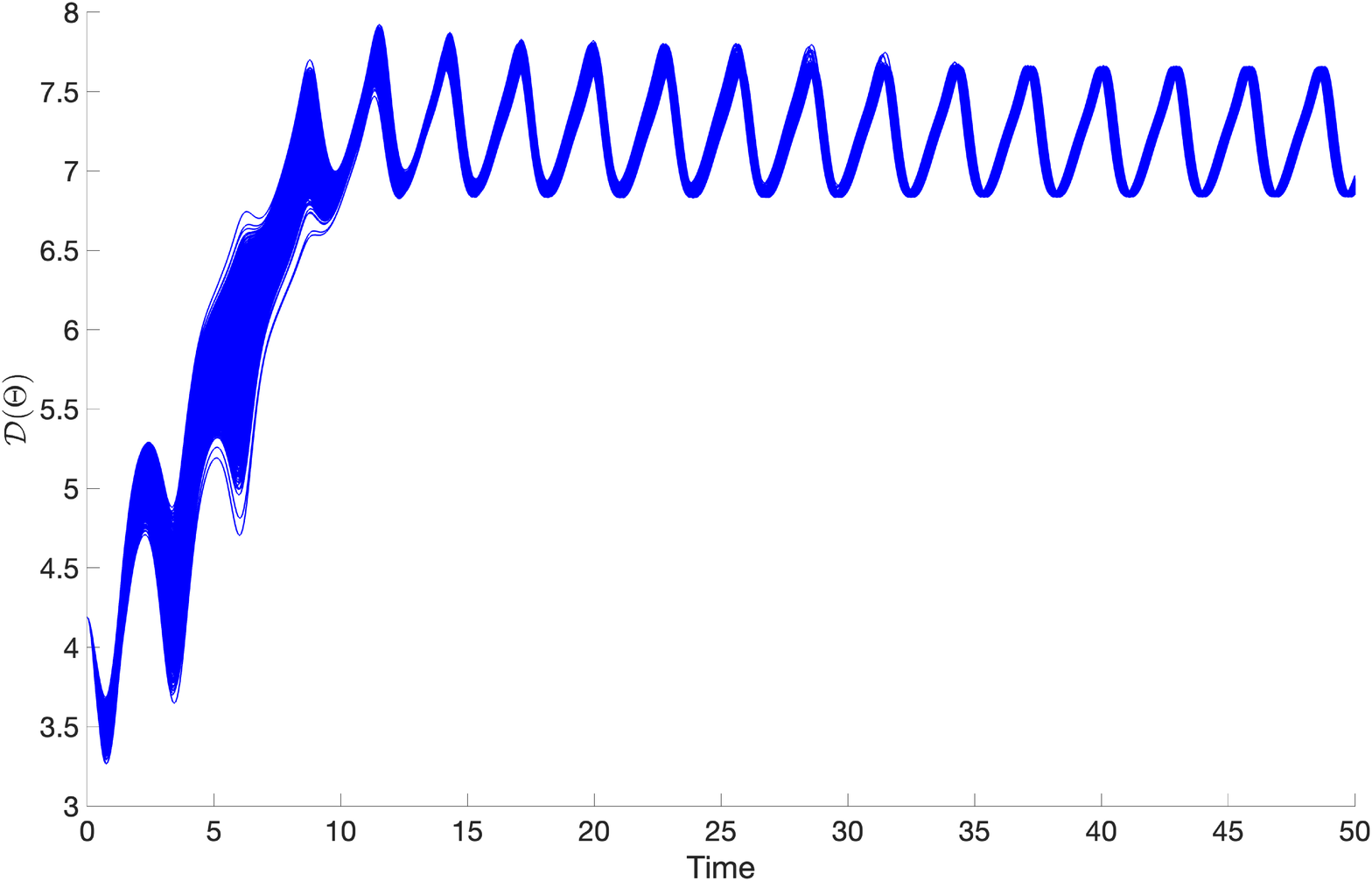}}
}
\caption{Effect of coupling strength}
\end{figure}

Figure 4 (b) suggests that also in a stochastic case, a large coupling regime does not rule out the emergence of phase locking, provided $\kappa$ and $\sigma$ are suitably balanced. We shall investigate the phase transitions in terms of the balance between $\sigma$ and $\kappa$ in the stochastic case more closely in a future work.
\section{Conclusion}\label{sec:conclusion}
We provided sufficient frameworks for phase-locked state emergence for the second-order (stochastic) Winfree model with inertia. In the deterministic case \eqref{sys:WFI}, we obtained uniform in time boundedness of the phase diameter $\max_{i,j}(\theta^i_t-\theta^j_t)$. The key observation towards this result is the following: provided the natural frequencies and the initial data are sufficiently narrowly spread, $\gamma\theta^{ij}+\omega^{ij}$ can be compared to a solution of a differential equation with affine periodic drift.  When the orbit of the periodic part of this drift is suitably small, one can conclude boundedness of the phase diameter. Our numerical simulations suggest that this result can be extended to more general sets of initial data and under milder constraints on the spread of $\nu^i$.

For the stochastic model \eqref{stoch-Winfree-eq}, using a Bernstein-type inequality we obtained lower bounds for the probability of pathwise phase-locking. We note that we did not observe a regularizing effect of the noisy perturbation in terms of the emergence of synchronous behavior, but that we rather had to constrain its effect on the particle system. Indeed, choosing the noise sufficiently small, the lower estimate for the probability of pathwise phase-locking can be made arbitrarily large in $(0,1)$. In future works, it will be interesting to find out whether this is an intrinsic phenomenon of the model or whether refined techniques reveal a certain synchronization by noise effect for the Winfree model with inertia, possibly for other types of multiplicative noise.

	\bibliography{bib-collection}

\def\ocirc#1{\ifmmode\setbox0=\hbox{$#1$}\dimen0=\ht0 \advance\dimen0
  by1pt\rlap{\hbox to\wd0{\hss\raise\dimen0
  \hbox{\hskip.2em$\scriptscriptstyle\circ$}\hss}}#1\else {\accent"17 #1}\fi}
\begin{thebibliography}{10}

\bibitem{ABPRS05}
J.~A. Acebr\'on, L.~L. Bonilla, C.~J. P\'erez~Vicente, F.~Ritort, and
  R.~Spigler.
\newblock The {K}uramoto model: A simple paradigm for synchronization
  phenomena.
\newblock {\em Rev. Mod. Phys.}, 77:137--185, 2005.

\bibitem{AH10}
S.~M. Ahn and S.-Y. Ha.
\newblock Stochastic flocking dynamics of the {C}ucker–-{S}male model with
  multiplicative white noises.
\newblock {\em Journal of Mathematical Physics}, 51(10):103301, 2010.

\bibitem{AS01}
J.~T. Ariaratnam and S.~H. Strogatz.
\newblock Phase diagram for the {W}infree model of coupled nonlinear
  oscillators.
\newblock {\em Phys. Rev. Lett.}, 86:4278--4281, 2001.

\bibitem{AL14}
F.~M. Atay and S.~Louca.
\newblock Spatially structured networks of pulse-coupled phase oscillators on
  metric spaces.
\newblock {\em Discrete and Continuous Dynamical Systems}, 34(9):3703--3745,
  2014.

\bibitem{BS00}
N.~J. Balmforth and R.~Sassi.
\newblock A shocking display of synchrony.
\newblock {\em Physica D: Nonlinear Phenomena}, 143(1):21--55, 2000.

\bibitem{BG06-book}
N.~Berglund and B.~Gentz.
\newblock {\em Noise-Induced Phenomena in Slow-Fast Dynamical Systems: A
  Sample-Paths Approach}.
\newblock Springer London, 2006.

\bibitem{BB66}
J.~Buck and E.~Buck.
\newblock Biology of synchronous flashing of fireflies.
\newblock {\em Nature}, 211(5049):562--564, 1966.

\bibitem{CG16}
R.~Catellier and M.~Gubinelli.
\newblock Averaging along irregular curves and regularisation of {ODE}s.
\newblock {\em Stochastic Processes and their Applications}, 126(8):2323--2366,
  2016.

\bibitem{Flandoli-book}
F.~Flandoli.
\newblock {\em Random Perturbation of PDEs and Fluid Dynamic Models: {\'E}cole
  D'{\'E}t{\'e}de Probabilit{\'e}s de Saint-Flour XL --2010}.
\newblock Springer, 2011.

\bibitem{FGP10}
F.~Flandoli, M.~Gubinelli, and E.~Priola.
\newblock Well-posedness of the transport equation by stochastic perturbation.
\newblock {\em Invent. Math.}, 180(1):1--53, 2010.

\bibitem{Gess18}
B.~Gess.
\newblock Regularization and well-posedness by noise for ordinary and partial
  differential equations.
\newblock In {\em Stochastic partial differential equations and related
  fields}, volume 229 of {\em Springer Proc. Math. Stat.}, pages 43--67.
  Springer, Cham, 2018.

\bibitem{GMNPS07}
F.~Giannuzzi, D.~Marinazzo, G.~Nardulli, M.~Pellicoro, and S.~Stramaglia.
\newblock Phase diagram of a generalized {W}infree model.
\newblock {\em Phys. Rev. E}, 75:051104, 2007.

\bibitem{HKM21-frustration}
S.-Y. Ha, M.~Kang, and B.~Moon.
\newblock On the emerging asymptotic patterns of the {W}infree model with
  frustrations.
\newblock {\em Nonlinearity}, 34(4):2454--2482, 2021.

\bibitem{HKM21-continuum}
S.-Y. Ha, M.~Kang, and B.~Moon.
\newblock Uniform-in-time continuum limit of the lattice {W}infree model and
  emergent dynamics.
\newblock {\em Kinetic and Related Models}, 14(6):1003--1033, 2021.

\bibitem{HKS21}
S.-Y. Ha, M.~Kang, and W.~Shim.
\newblock Emergent asymptotic patterns for the discrete and continuous
  {W}infree models with inertia.
\newblock {\em Commun. Math. Sci.}, 19(8):2217--2248, 2021.

\bibitem{HK18-timedelay}
S.-Y. Ha and D.~Kim.
\newblock Robustness and asymptotic stability for the {W}infree model on a
  general network under the effect of time-delay.
\newblock {\em Journal of Mathematical Physics}, 59(11):112702, 2018.

\bibitem{HKM21-adaptive}
S.-Y. Ha, D.~Kim, and B.~Moon.
\newblock Interplay of random inputs and adaptive couplings in the {W}infree
  model.
\newblock {\em Communications on Pure and Applied Analysis}, 20(11):3975--4006,
  2021.

\bibitem{HKPR16-Basic1}
S.-Y. Ha, D.~Ko, J.~Park, and S.~W. Ryoo.
\newblock Emergent dynamics of {W}infree oscillators on locally coupled
  networks.
\newblock {\em Journal of Differential Equations}, 260(5):4203--4236, 2016.

\bibitem{HKPR17}
S.-Y. Ha, D.~Ko, J.~Park, and S.~W. Ryoo.
\newblock Emergence of partial locking states from the ensemble of {W}infree
  oscillators.
\newblock {\em Quart. Appl. Math.}, 75:39--68, 2017.

\bibitem{HKS-unpublished}
S.-Y. Ha, D.~Ko, and W.~Shim.
\newblock On the stochastic synchronization of the {W}infree model with a
  multiplicative noise.
\newblock Preprint.

\bibitem{HPR15}
S.-Y. Ha, J.~Park, and S.~W. Ryoo.
\newblock Emergence of phase-locked states for the {W}infree model in a large
  coupling regime.
\newblock {\em Discrete and Continuous Dynamical Systems}, 35(8):3417--3436,
  2015.

\bibitem{OKT17}
A.~Kessi, W.~Oukil, and P.~Thieullen.
\newblock Synchronization hypothesis in the {W}infree model.
\newblock {\em Dynamical Systems}, 32(3):326--339, 2017.

\bibitem{K19}
D.~Ko.
\newblock Practical synchronization of {W}infree oscillators in a random
  environment.
\newblock {\em Journal of Statistical Physics}, 174(6):1263--1287, 2019.

\bibitem{K75}
Y.~Kuramoto.
\newblock Self-entrainment of a population of coupled non-linear oscillators.
\newblock In {\em International Symposium on Mathematical Problems in
  Theoretical Physics}, pages 420--422. Springer Berlin Heidelberg, 1975.

\bibitem{K84}
Y.~Kuramoto.
\newblock {\em Chemical Oscillations, Waves, and Turbulence}.
\newblock Springer Berlin Heidelberg, 1984.

\bibitem{PRK01-book}
J.~Kurths, A.~Pikovsky, and M.~Rosenblum.
\newblock {\em Synchronization: A Universal Concept in Nonlinear Sciences}.
\newblock Cambridge Nonlinear Science Series. Cambridge University Press, 2001.

\bibitem{QRS07}
D.~D. Quinn, R.~H. Rand, and S.~H. Strogatz.
\newblock Singular unlocking transition in the {W}infree model of coupled
  oscillators.
\newblock {\em Phys. Rev. E}, 75:036218, 2007.

\bibitem{S87}
S.~H. Strogatz.
\newblock Human sleep and circadian rhythms: a simple model based on two
  coupled oscillators.
\newblock {\em Journal of Mathematical Biology}, 25(3):327--347, 1987.

\bibitem{Veretennikov80}
A.~Y. Veretennikov.
\newblock Strong solutions and explicit formulas for solutions of stochastic
  integral equations.
\newblock {\em Mat. Sb. (N.S.)}, 111(153)(3):434--452, 480, 1980.

\bibitem{W67}
A.~T. {W}infree.
\newblock Biological rhythms and the behavior of populations of coupled
  oscillators.
\newblock {\em Journal of Theoretical Biology}, 16(1):15--42, 1967.

\bibitem{W79}
A.~T. {W}infree.
\newblock 24 hard problems about the mathematics of 24 hour rhythms.
\newblock In {\em Nonlinear oscillations in biology ({P}roc. {T}enth {S}ummer
  {S}em. {A}ppl. {M}ath., {U}niv. {U}tah, {S}alt {L}ake {C}ity, {U}tah, 1978)},
  volume~17 of {\em Lectures in Appl. Math.}, pages 93--126. Amer. Math. Soc.,
  Providence, R.I., 1979.

\end{thebibliography}
	\vspace{1cm}
	\noindent \textit{Myeongju Kang} Research Institute of Basic Sciences, Seoul National University, Seoul 08826, Republic of Korea\\
	\textit{E-mail address: bear0117@snu.ac.kr}\\
	\textit{Marco Rehmeier} Faculty of Mathematics, Bielefeld University, Universitätsstraße 25, 33615 Bielefeld, Germany\\
	\textit{E-mail address: mrehmeier@math.uni-bielefeld.de}
\end{document}